
\documentclass[10pt,final,twocolumn]{IEEEtran}                                                          


\newtheorem{proposition}{Proposition}
\newtheorem{theorem}{Theorem}
\newtheorem{definition}{Definition}
\newtheorem{lemma}{Lemma}
\newtheorem{corollary}{Corollary}


\usepackage{cite}

\ifCLASSINFOpdf
\usepackage[pdftex]{graphicx}
\graphicspath{{../pdf/}{../jpeg/}}
\DeclareGraphicsExtensions{.pdf,.jpeg,.png}
\else
\usepackage[dvipdf]{graphicx}
\graphicspath{{../eps/}}
\DeclareGraphicsExtensions{.eps}
\fi

\usepackage[cmex10]{amsmath}
\usepackage{amssymb}
\usepackage{array}
\usepackage{mdwmath}
\usepackage{mdwtab}
\usepackage{eqparbox}
\usepackage[tight,footnotesize]{subfigure}

\hyphenation{op-tical net-works semi-conduc-tor}

\title{\LARGE \bf
Time-Optimal Frictionless Atom Cooling in Harmonic Traps
}


\author{Dionisis Stefanatos, Heinz Schaettler, and Jr-Shin Li
\thanks{This work was supported by the Air Force Office of Scientific Research under Young Investigator Award FA9550-10-1-0146}
\thanks{D. Stefanatos, H. Schaettler, and J.-S. Li are with the Department of Electrical and Systems Engineering, Washington University,
        St. Louis, MO 63130, USA
        {\tt\small dionisis@seas.wustl.edu, hms@wustl.edu, jsli@seas.wustl.edu}}%
}

\begin{document}

\maketitle

\begin{abstract}

Frictionless atom cooling in harmonic traps is formulated as a time-optimal
control problem and a synthesis of optimal controlled trajectories is obtained. This work has already been used to determine the minimum time for transition between two thermal states and to show the emergence of the third law of classical thermodynamics from quantum thermodynamics. It can also find application in the fast adiabatic-like expansion of Bose-Einstein condensates, with possible applications in atom interferometry.
This paper is based on our recently published article in {\it SIAM J. Control Optim.} \cite{Stefanatos11}.

\end{abstract}

\begin{IEEEkeywords}
Quantum control, time-optimal control, atom cooling, quantum thermodynamics
\end{IEEEkeywords}

\IEEEpeerreviewmaketitle

\section{INTRODUCTION}

During the last decades, a wealth of analytical and numerical tools from control theory and optimization have been successfully employed to analyze and control the performance of quantum mechanical systems, advancing quantum technology in areas as diverse as physical chemistry, metrology, and quantum information processing \cite{Mabuchi05}. Although measurement-based feedback control \cite{Doherty00} and the promising coherent feedback control \cite{James08} have gained considerable attention, open-loop control has been proven quite effective. 
Analytical solutions for optimal control problems defined on low-dimensional quantum systems have been derived, leading to novel pulse sequences with unexpected gains compared with those traditionally used \cite{Khaneja01,D'Alessandro01,Boscain02,Stefanatos04,Bonnard09,Stefanatos10}, while numerical optimization methods, based on gradient algorithms or direct approaches, have been used to address more complex tasks and to minimize the effect of the ubiquitous experimental imperfections \cite{Peirce88,Khaneja05,Li09}.

At the heart of modern quantum technology lies the efficient cooling of trapped atoms, since it has created the ultimate physical systems thus far
for precision spectroscopy, frequency standards, and
even tests of fundamental physics \cite{Wieman99}, as well as candidate systems for quantum information processing \cite{Cirac04}.
In the present article we study a time-optimal control problem related to the frictionless cooling of atoms trapped in a time-dependent harmonic potential.
Frictionless atom cooling in a harmonic trapping potential is defined as the problem of
changing the harmonic frequency of the trap to some lower final value, while
keeping the populations of the initial and final levels invariant, thus
without generating friction and heating. Conventionally, an adiabatic process
is used where the frequency is changed slowly and the system follows the instantaneous
eigenvalues and eigenstates of the time-dependent Hamiltonian. The drawback of this
method is the long necessary times which may render it impractical. A way to bypass this problem is to use the theory of the
time-dependent quantum harmonic oscillator \cite{Lewis69} to prepare the same
final states and energies as with the adiabatic process at a given final time, without necessarily
following the instantaneous eigenstates at each moment. Achieving this goal in minimum time
has many important potential applications. For example, it can be used to
reach extremely low temperatures inaccessible by standard cooling techniques
\cite{Leanhardt03} and to reduce the velocity dispersion and collisional shifts
for spectroscopy and atomic clocks \cite{Bize05}. It is also closely related to the problem of
moving in minimum time a system between two thermal states \cite{Salamon09}.

It was initially
proved that minimum transfer time for the aforementioned problem can be achieved with ``bang-bang" real
frequency controls \cite{Salamon09}. Later, it was
shown that when the restriction for real frequencies is relaxed, allowing the
trap to become an expulsive parabolic potential at some time intervals,
shorter transfer times can be obtained, leading to a ``shortcut to adiabaticity" \cite{Chen10}.
In our recent work \cite{Stefanatos10PRA}, we formulated frictionless atom
cooling as a minimum-time optimal control problem, permitting the frequency to
take real and imaginary values in specified ranges. We showed that the optimal
solution has again a ``bang-bang" form and used this fact to obtain estimates of
the minimum transfer times for various numbers of switchings. In the present
article we complete our previous work by fully solving the corresponding
time-optimal control problem and obtaining the optimal synthesis.
The results presented here have already been used to determine the minimum time for transition between two thermal states \cite{Hoffmann_EPL11} and to show the emergence of the third law of classical thermodynamics from quantum thermodynamics \cite{Rezek11}, as highlighted in the conclusion.

\section{FORMULATION OF THE PROBLEM IN TERMS OF OPTIMAL CONTROL}

The evolution of the wavefunction $\psi(t,x)$ of a particle in a
one-dimensional parabolic trapping potential with time-varying frequency
$\omega(t)$ is given by the Schr\"{o}dinger equation
\begin{equation}
\label{Schrodinger}i\hbar\frac{\partial\psi}{\partial t}=\left[  -\frac
{\hbar^{2}}{2m}\frac{\partial^{2}}{\partial x^{2}}+\frac{m\omega^{2}%
(t)}{2}x^{2}\right]  \psi,
\end{equation}
where $m$ is the particle mass and $\hbar$ is Planck's constant; $x\in\mathbb{R}$ and $\psi$ is a square-integrable function on the real line. When
$\omega(t)$ is \emph{constant}, the above equation can be solved by separation
of variables and the solution is $\psi(t,x)=\sum_{n=0}^{\infty}c_{n} e^{-iE^{\omega
}_{n}t/\hbar}\Psi^{\omega}_{n}(x)$,
where $E^{\omega}_{n}$
are the eigenvalues and $\Psi^{\omega}_{n}(x)$
are the eigenfunctions of the quantum harmonic oscillator \cite{Merzbacher98}.
The coefficients $c_{n}$ can be found from
the initial condition $c_{n}=\int_{-\infty}^{\infty}\psi(0,x)\Psi^{\omega}
_{n}(x)dx$.

\begin{figure}[t]
\centering
\includegraphics[width=0.8\linewidth]{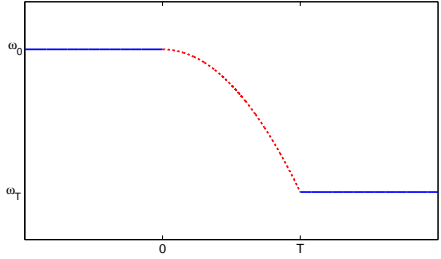}\caption{Time evolution of the
harmonic trap frequency.}%
\label{fig:frequency}%
\end{figure}

Consider now the case shown in Fig.\ \ref{fig:frequency}, where $\omega
(t)=\omega_{0}$ for $t\leq0$ and $\omega(t)=\omega_{T}<\omega_{0}$ for $t\geq
T$. This corresponds to a temperature reduction by a factor $\omega_{T}%
/\omega_{0}$, if the initial and final states are canonical \cite{Chen10}. For
frictionless cooling, the path $\omega(t)$ between these two values should be
chosen so that the populations of all the oscillator levels $n=0,1,2,\ldots$
for $t\geq T$ are equal to the ones at $t=0$. In other words, if
\begin{equation}
\label{initial_condition}\psi(0,x)=\sum_{n=0}^{\infty}c_{n}(0)\Psi^{\omega
_{0}}_{n}(x),\nonumber
\end{equation}
and
\begin{equation}
\label{final_condition}\psi(t,x)=\sum_{n=0}^{\infty}c_{n}(t)\Psi^{\omega_{T}%
}_{n}(x),\,t\geq T,\nonumber
\end{equation}
then frictionless cooling is achieved when
\begin{equation}
\label{frictionless_cooling}|c_{n}(t)|^{2}=|c_{n}(0)|^{2},\,t\geq T,
n=0,1,2,\ldots
\end{equation}
Among all the paths $\omega(t)$ that result in (\ref{frictionless_cooling}),
we would like to find one that achieves frictionless cooling in minimum time
$T$. In the following we provide a sufficient condition on $\omega(t)$ for
frictionless cooling and we use it to formulate the corresponding time-optimal
control problem.

\begin{proposition}
\label{prop:fr_cooling}\cite{Stefanatos11} If $\omega(t)$, with $\omega(0)=\omega_{0}$ and
$\omega(t)=\omega(T)=\omega_{T}$ for $t\geq T$ is such that the Ermakov
equation 
\begin{equation}
\label{Ermakov}\ddot{b}(t)+\omega^{2}(t)b(t)=\frac{\omega_{0}^{2}}{b^{3}(t)}%
\end{equation}
has a solution $b(t)$ with $b(0)=1,\dot{b}(0)=0$ and $b(t)=b(T)=(\omega
_{0}/\omega_{T})^{1/2},t\geq T$, then condition (\ref{frictionless_cooling})
for frictionless cooling is satisfied.
\end{proposition}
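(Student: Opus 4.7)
The plan is to exploit the well-known Ermakov--Lewis invariant for the time-dependent harmonic oscillator: under hypothesis (\ref{Ermakov}), equation (\ref{Schrodinger}) admits an exact family of solutions parametrized by $b(t)$ whose modulus-squared expansion coefficients in a natural basis are time-independent. The proof then reduces to checking that the boundary data on $b$ force this basis to coincide with $\{\Psi^{\omega_{0}}_{n}\}$ at $t=0$ and with $\{\Psi^{\omega_{T}}_{n}\}$ for $t\ge T$.

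Operationally, I would introduce the scaling and gauge transformation
\[
\psi(t,x) = \frac{1}{\sqrt{b(t)}}\exp\!\left(\frac{im\dot{b}(t)}{2\hbar b(t)}x^{2}\right)\chi(\tau,y),
\]
with rescaled coordinate $y=x/b(t)$ and rescaled time $\tau(t)=\int_{0}^{t}b^{-2}(s)\,ds$. Substituting into (\ref{Schrodinger}) and collecting terms produces a residual coefficient in front of $x^{2}\chi$ proportional to $\ddot{b}+\omega^{2}b-\omega_{0}^{2}/b^{3}$; precisely by (\ref{Ermakov}) this factor vanishes, and $\chi$ is seen to satisfy the constant-frequency Schr\"odinger equation
\[
i\hbar\,\partial_{\tau}\chi=-\frac{\hbar^{2}}{2m}\partial_{y}^{2}\chi+\frac{m\omega_{0}^{2}}{2}y^{2}\chi.
\]
This cancellation is the only nontrivial step of the argument, and it is the place where the Ermakov hypothesis is used in full.

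The general solution of this time-independent equation is $\chi(\tau,y)=\sum_{n}c_{n}e^{-iE^{\omega_{0}}_{n}\tau/\hbar}\Psi^{\omega_{0}}_{n}(y)$ with constants $c_{n}$. Transforming back and evaluating at the endpoints, at $t=0$ the conditions $b(0)=1$, $\dot{b}(0)=0$, $\tau(0)=0$ reduce the expression to $\psi(0,x)=\sum_{n}c_{n}\Psi^{\omega_{0}}_{n}(x)$, identifying $c_{n}=c_{n}(0)$. For $t\ge T$, $b(t)=(\omega_{0}/\omega_{T})^{1/2}$ and $\dot{b}(t)=0$, and the standard scaling identity $b^{-1/2}\Psi^{\omega_{0}}_{n}(x/b)=\Psi^{\omega_{T}}_{n}(x)$ for the Hermite--Gauss eigenfunctions gives $\psi(t,x)=\sum_{n}c_{n}e^{-iE^{\omega_{0}}_{n}\tau(t)/\hbar}\Psi^{\omega_{T}}_{n}(x)$. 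Matching with $\psi(t,x)=\sum_{n}c_{n}(t)\Psi^{\omega_{T}}_{n}(x)$ then yields $|c_{n}(t)|=|c_{n}(0)|$, which is (\ref{frictionless_cooling}).
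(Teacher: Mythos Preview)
Your argument is correct and is precisely the standard Lewis--Riesenfeld/Ermakov route that underlies the result: the scaling-plus-gauge transformation reduces the time-dependent problem to a constant-frequency oscillator exactly when the Ermakov equation holds, and the boundary conditions on $b$ and $\dot b$ make the auxiliary basis coincide with $\{\Psi^{\omega_0}_n\}$ at $t=0$ and $\{\Psi^{\omega_T}_n\}$ for $t\ge T$. The present paper does not reproduce a proof of this proposition but merely cites \cite{Stefanatos11}; the proof there follows the same transformation you outline (cf.\ also \cite{Lewis69,Chen10}), so your proposal is essentially identical in approach.
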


If we set
\begin{equation}
x_{1}=b,\,x_{2}=\frac{\dot{b}}{\omega_{0}},\,u(t)=\frac{\omega^{2}(t)}%
{\omega_{0}^{2}},
\end{equation}
and rescale time according to $t_{\mbox{new}}=\omega_{0} t_{\mbox{old}}$, we
obtain the following system of first order differential equations, equivalent
to the Ermakov equation 
\begin{align}
\label{system1}\dot{x}_{1}  &  = x_{2},\\
\label{system2}
\dot{x}_{2}  &  = -ux_{1}+\frac{1}{x_{1}^{3}}.
\end{align}
By incorporating the boundary conditions and possible restrictions on
$\omega(t)$ due, for example, to experimental limitations, and setting $\gamma=(\omega_{0}/\omega_{T})^{1/2}>1$, we obtain
the following time-optimal problem for frictionless cooling

\newtheorem{problem}{problem} \begin{problem}\label{problem}
Find $-u_1\leq u(t)\leq u_2$ with $u(0)=1, u(T)=1/\gamma^4$ such that starting from $(x_1(0),x_2(0))=(1,0)$, the above system reaches the final point $(x_1(T),x_2(T))=(\gamma,0), \gamma>1$, in minimum time $T$.
\end{problem}

The boundary conditions on the state variables $(x_{1},x_{2})$ are equivalent
to those for $b,\dot{b}$, while the boundary conditions on the control
variable $u$ are equivalent to those for $\omega$, so the requirements of Proposition \ref{prop:fr_cooling} are satisfied. Parameters $u_{1},u_{2}>0$
define the allowable values of $u(t)$ and it is $u_{2}\geq u(0)=1$. Note that
the possibility $\omega^{2}(t)<0$ (expulsive parabolic potential) for some
time intervals is permitted \cite{Chen10}. It is natural to
consider that also $u_{1}\geq1$, i.e. we can at least achieve the negative
potential $V(x)=-m\omega_{0}^{2}x^{2}/2$. Finally observe that the above
system describes the one-dimensional Newtonian motion of a unit-mass particle,
with position coordinate $x_{1}$ and velocity $x_{2}$. The acceleration
(force) acting on the particle is $-ux_{1}+1/x_{1}^{3}$. This point of view
can provide useful intuition about the time-optimal solution, as we will see later.


In the next section we solve the following optimal control problem

\begin{problem}\label{problem1}
Find $-u_1\leq u(t)\leq u_2$, with $u_1,u_2\geq 1$, such that starting from $(x_1(0),x_2(0))=(1,0)$, the system above reaches the final point $(x_1(T),x_2(T))=(\gamma,0), \gamma>1$, in minimum time $T$.
\end{problem}

In both problems the class of admissible controls formally are Lebesgue measurable functions that take values in the control set $[-u_1,u_2]$ almost everywhere. However, as we shall see, optimal controls are piecewise continuous, in fact bang-bang. The optimal control found for problem \ref{problem1} is also optimal for problem \ref{problem}, with the addition of instantaneous jumps at the initial and final points, so that the boundary conditions $u(0)=1$ and $u(T)=1/\gamma^4$ are satisfied. Note that in connection with Fig. \ref{fig:frequency}, a natural way to think about these conditions is that $u(t)=1$ for $t\leq 0$ and $u(t)=1/\gamma^4$ for $t\geq T$; in the interval $(0,T)$ we pick the control that achieves the desired transfer in minimum time.


\section{OPTIMAL SOLUTION}

The system described by (\ref{system1}), (\ref{system2}) can be expressed
in compact form as
\begin{equation}
\dot{x}=f(x)+ug(x), \label{affine}%
\end{equation}
where the vector fields are given by
\begin{equation}
f=\left(
\begin{array}
[c]{c}%
x_{2}\\
1/x_{1}^{3}%
\end{array}
\right)  ,\,\,g=\left(
\begin{array}
[c]{c}%
0\\
-x_{1}%
\end{array}
\right)
\end{equation}
and $x\in\mathcal{D}=\{(x_{1},x_{2})\in\mathbb{R}^{2}:x_{1}>0\}$ and $u\in
U=[-u_{1},u_{2}]$. Admissible controls are Lebesgue measurable functions that
take values in the control set $U$. Given an admissible control $u$ defined
over an interval $[0,T]$, the solution $x$ of the system (\ref{affine})
corresponding to the control $u$ is called the corresponding trajectory and we
call the pair $(x,u)$ a controlled trajectory. Note that the domain
$\mathcal{D}$\ is invariant in the sense that trajectories cannot leave
$\mathcal{D}$. Starting with any positive initial condition $x_{1}(0)>0$, and
using any admissible control $u$, as $x_{1}\rightarrow0^{+}$, the
\textquotedblleft repulsive force" $1/x_{1}^{3}$ leads to an increase in
$x_{1}$ that will keep $x_{1}$ positive (as long as the solutions exist).

For a constant $\lambda_{0}$ and a row vector $\lambda=(\lambda_{1}%
,\lambda_{2})\in\left(  \mathbb{R}^{2}\right)  ^{\ast}$ define the control
Hamiltonian as%
\[
H=H(\lambda_{0},\lambda,x,u)=\lambda_{0}+\langle\lambda,f(x)+ug(x)\rangle.
\]
Pontryagin's Maximum Principle \cite{Pontryagin}
provides the following necessary conditions for optimality:

\begin{theorem}
[\textrm{Maximum principle}%
]\cite{Pontryagin} \label{prop:max_principle} Let $(x_{\ast}(t),u_{\ast}(t))$
be a time-optimal controlled trajectory that transfers the initial condition
$x(0)=x_{0}$ into the terminal state $x(T)=x_{T}$. Then it is a necessary
condition for optimality that there exists a constant $\lambda_{0}\leq0$ and
nonzero, absolutely continuous row vector function $\lambda(t)$ such that:

\begin{enumerate}
\item $\lambda$ satisfies the so-called adjoint equation%
\[
\dot{\lambda}(t)=-\frac{\partial H}{\partial x}(\lambda_{0},\lambda
(t),x_{\ast}(t),u_{\ast}(t))
\]

\item For $0\leq t\leq T$ the function $u\mapsto H(\lambda_{0}%
,\lambda(t),x_{\ast}(t),u)$ attains its maximum\ over the control set $U$ at
$u=u_{\ast}(t)$.

\item $H(\lambda_{0},\lambda(t),x_{\ast}(t),u_{\ast}(t))\equiv0$.
\end{enumerate}
\end{theorem}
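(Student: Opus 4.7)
The final statement is the classical Pontryagin Maximum Principle, specialized to the free-terminal-time, fixed-endpoint, minimum-time problem for the control-affine system $\dot x=f(x)+ug(x)$. Since the paper cites it rather than reproves it, my proposal is to outline the now-standard proof via \emph{needle variations} combined with a Hahn--Banach separation argument on the first-order reachable cone.

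First I would construct a family of admissible perturbations of the optimal pair $(x_\ast,u_\ast)$. Pick Lebesgue points $0<\tau_1<\dots<\tau_N<T$ of $u_\ast$ and admissible values $v_1,\dots,v_N\in U$, and for $\boldsymbol\varepsilon=(\varepsilon_1,\dots,\varepsilon_N)\ge 0$ replace $u_\ast$ by $v_i$ on $[\tau_i-\varepsilon_i,\tau_i]$, producing an admissible $u_{\boldsymbol\varepsilon}$ and trajectory $x_{\boldsymbol\varepsilon}$. Writing $A(t)=\partial_x(f+u_\ast(t)g)(x_\ast(t))$ and $\Phi(t,s)$ for the state-transition matrix of $\dot w=A(t)w$, a Gronwall-type linearization (exploiting the control-affine structure) gives
\[
x_{\boldsymbol\varepsilon}(T)=x_\ast(T)+\sum_{i=1}^N\varepsilon_i\,(v_i-u_\ast(\tau_i))\,\Phi(T,\tau_i)g(x_\ast(\tau_i))+o(|\boldsymbol\varepsilon|).
\]
Adjoining the direction obtained by varying $T$ itself, the first-order variations sweep out a convex cone $K\subset\mathbb R^2\times\mathbb R$ of attainable directions at $(x_\ast(T),T)$.

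The core of the proof is to show that time-optimality forces $K$ to lie in a closed half-space. Otherwise, regarding $\boldsymbol\varepsilon\mapsto x_{\boldsymbol\varepsilon}(T-\delta)$ as a $C^1$ map from a small simplex into $\mathbb R^2$ and applying a Brouwer / open-mapping argument, one could reach the prescribed target $x_T$ at some time strictly less than $T$, contradicting optimality. Once $K$ is separated, Hahn--Banach produces a nonzero covector $(\lambda_0,\lambda(T))$ with $\lambda_0\le 0$ by the sign convention for minimizing time. Propagating $\lambda(T)$ backwards by $\dot\lambda=-\lambda A(t)$ makes $t\mapsto\langle\lambda(t),w(t)\rangle$ constant along linearized trajectories, so the separation inequality applied to the needle at $(\tau,v)$ collapses to
\[
\langle\lambda(\tau),g(x_\ast(\tau))\rangle\,(v-u_\ast(\tau))\le 0\qquad\text{for all }v\in U,
\]
which is precisely the pointwise maximum condition (2); the backward equation $\dot\lambda=-\lambda A(t)$ unpacks into condition (1), and the time-direction component of the separating covector delivers the transversality relation $H\equiv 0$ of condition (3).

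The main obstacle is the nonlinear ``open-mapping'' step: $K$ is only a first-order object, and one must lift its nondegeneracy to a genuine local-covering statement for the nonlinear reachable set, so that a would-be interior direction of $K$ actually corresponds to a feasible faster trajectory. The standard device is a Brouwer fixed-point argument applied to the parameter simplex of finitely many needle variations, combined with careful bookkeeping of the $o(|\boldsymbol\varepsilon|)$ remainder. Once this is in place, the remaining ingredients---existence of the adjoint, the maximum condition, constancy of $H$ for autonomous dynamics, and the sign $\lambda_0\le 0$---follow by routine manipulations of the linearized equation and the separating covector.
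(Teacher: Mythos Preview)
The paper does not prove this theorem at all: it is stated with the citation \cite{Pontryagin} and used as a black box, with the text immediately proceeding to specialize the Hamiltonian and adjoint equation to the particular system (\ref{system1})--(\ref{system2}). Your outline is a correct sketch of the classical needle-variation proof of the Maximum Principle---construct needle perturbations, linearize via the variational/state-transition matrix, build the first-order attainable cone, use a Brouwer-type open-mapping argument to show optimality forces this cone into a half-space, then separate and transport the resulting covector backward along the adjoint equation to obtain conditions (1)--(3). So your approach is not ``different'' from the paper's so much as it supplies content the paper deliberately omits; what it buys you is a self-contained argument, whereas the paper simply invokes a foundational result and reserves its effort for the problem-specific analysis (bang-bang structure, inter-switching times, etc.) that follows.
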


We call a controlled trajectory $(x,u)$ for which there exist multipliers
$\lambda_{0}$ and $\lambda(t)$ such that these conditions are satisfied an
extremal. Extremals for which $\lambda_{0}=0$ are called abnormal. If
$\lambda_{0}<0$, then without loss of generality we may rescale the $\lambda
$'s and set $\lambda_{0}=-1$. Such an extremal is called normal.

For the system (\ref{system1}), (\ref{system2}) we have
\begin{equation}
H(\lambda_{0},\lambda,x,u)=\lambda_{0}+\lambda_{1}x_{2}+\lambda_{2}\left(  \frac{1}%
{x_{1}^{3}}-x_{1}u\right)  ,\label{hamiltonian}%
\end{equation}
and thus
\begin{equation}
\dot{\lambda}=-\lambda\left(
\begin{array}
[c]{cc}%
0 & 1\\
-(u+3/x_{1}^{4}) & 0
\end{array}
\right)  =-\lambda A\label{adjoint}%
\end{equation}

Observe that $H$ is a linear function of the bounded control variable $u$. The
coefficient at $u$ in $H$ is $-\lambda_{2}x_{1}$ and, since $x_{1}>0$, its
sign is determined by $\Phi=-\lambda_{2}$, the so-called \emph{switching
function}. According to the maximum principle, point 2 above, the optimal
control is given by $u=-u_{1}$ if $\Phi<0$ and by $u=u_{2}$ if $\Phi>0$. The
maximum principle provides a priori no information about the control at times
$t$ when the switching function $\Phi$ vanishes. However, if $\Phi(t)=0$ and
$\dot{\Phi}(t)\neq0$, then at time $t$ the control switches between its
boundary values and we call this a bang-bang switch. If $\Phi$ were to vanish
identically over some open time interval $I$ the corresponding control is
called \emph{singular}.

\begin{proposition}
For Problem \ref{problem1} optimal controls are bang-bang.
\end{proposition}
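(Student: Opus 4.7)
The plan is to rule out singular arcs by a standard Pontryagin-type computation, after which the bang-bang structure on the boundary values $\{-u_1,u_2\}$ follows immediately from point 2 of Theorem \ref{prop:max_principle}.

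First I would suppose, for contradiction, that the switching function $\Phi(t)=-\lambda_2(t)$ vanishes identically on some open subinterval $I\subset[0,T]$. On $I$ we then have $\lambda_2\equiv 0$, hence $\dot\lambda_2\equiv 0$ as well. Using the adjoint equation (\ref{adjoint}), which componentwise reads $\dot\lambda_1=\lambda_2(u+3/x_1^4)$ and $\dot\lambda_2=-\lambda_1$, the identity $\dot\lambda_2\equiv 0$ forces $\lambda_1\equiv 0$ on $I$. Thus both components of the covector vanish on $I$.

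Next I would substitute $\lambda_1=\lambda_2=0$ into the identity $H(\lambda_0,\lambda(t),x_*(t),u_*(t))\equiv 0$ from point 3 of the maximum principle. From (\ref{hamiltonian}) this immediately gives $\lambda_0=0$ on $I$. Combined with $\lambda_1=\lambda_2=0$ this contradicts the nontriviality requirement of Theorem \ref{prop:max_principle} that $(\lambda_0,\lambda(t))\neq 0$. Hence no singular arc can exist, and $\Phi$ does not vanish on any open interval. By point 2 of the maximum principle, $u_*(t)=u_2$ wherever $\Phi(t)>0$ and $u_*(t)=-u_1$ wherever $\Phi(t)<0$, so the optimal control is bang-bang.

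The only potential subtlety, and what I expect to be the main thing to flag rather than a real obstacle, is the possibility of accumulation of switching times. However, since the state and adjoint equations have analytic right-hand sides on $\mathcal{D}$ and the adjoint system is linear in $\lambda$, the switching function $\Phi$ is real-analytic on each maximal subinterval where $u_*$ is constant; combined with the impossibility of $\Phi\equiv 0$ on an open set established above, the zeros of $\Phi$ are isolated, so switches occur at a locally finite set of times. This confirms that $u_*$ is piecewise constant with values in $\{-u_1,u_2\}$, completing the proof of the proposition.
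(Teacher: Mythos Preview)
Your core argument is correct and is essentially the paper's. The paper, however, makes the observation pointwise rather than on an interval: whenever $\Phi(t)=-\lambda_2(t)=0$, nontriviality of $\lambda(t)$ (which is how Theorem~\ref{prop:max_principle} is stated here---$\lambda(t)$ itself is nonzero, not merely $(\lambda_0,\lambda)\neq 0$) forces $\lambda_1(t)\neq 0$; since $\dot\Phi=-\dot\lambda_2=\lambda_1$ from (\ref{adjoint}), every zero of $\Phi$ has $\dot\Phi(t)\neq 0$ and is therefore simple and isolated. This makes your detour through $H=0$ to kill $\lambda_0$ unnecessary, and more importantly it replaces your last paragraph entirely.

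That last paragraph, as written, is circular: you invoke real-analyticity of $\Phi$ ``on each maximal subinterval where $u_*$ is constant,'' but the adjoint equation for $\lambda_1$ involves $u_*$, and that $u_*$ is piecewise constant is precisely what you are trying to establish. The pointwise argument above sidesteps this: the relation $\dot\lambda_2=-\lambda_1$ from (\ref{adjoint}) does not involve $u$ at all, so $\Phi=-\lambda_2$ is $C^1$ with nonvanishing derivative at each of its zeros, and isolation of the switching times follows without any appeal to analyticity.
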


\begin{proof}
Whenever the switching function $\Phi(t)=-\lambda_{2}(t)$ vanishes at some
time $t$, then it follows from the non-triviality of the multiplier
$\lambda(t)$ that its derivative $\dot{\Phi}(t)=-\dot{\lambda}_{2}%
(t)=\lambda_{1}(t)$ is non-zero. Hence the switching function changes sign and
there is a bang-bang switch at time $t$.
\end{proof}


\begin{definition}
We denote the vector fields corresponding to the constant bang controls
$-u_{1}$ and $u_{2}$ by $X=f-u_{1}g$ and $Y=f+u_{2}g$, respectively, and call
the trajectories corresponding to the constant controls $u\equiv-u_{1}$ and
$u\equiv u_{2}$ $X$- and $Y$-trajectories. A concatenation of an
$X$-trajectory followed by a $Y$-trajectory is denoted by $XY$ while the
concatenation in the inverse order is denoted by $YX$.
\end{definition}

In this paper we establish the precise concatenation sequences for optimal
controls and in particular calculate the times between switchings explicitly.

\begin{proposition}
All the extremals are normal.
\end{proposition}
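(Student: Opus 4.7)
The plan is to argue by contradiction: suppose that some extremal is abnormal, so $\lambda_0=0$, and show that such an extremal cannot possibly transfer $(1,0)$ to $(\gamma,0)$ with $\gamma>1$.

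First I would combine abnormality with the bang-bang structure just established. At every interior switching time $t\in(0,T)$ one has $\lambda_2(t)=0$, and the non-triviality argument used in the preceding proof still forces $\lambda_1(t)\neq 0$. Substituting $\lambda_0=0$ and $\lambda_2(t)=0$ into the identity $H\equiv 0$ leaves $\lambda_1(t)\,x_2(t)=0$, whence $x_2(t)=0$. Thus every interior switch of an abnormal extremal must lie on the $x_1$-axis.

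Next I would read off the geometry of the two bang flows from the conserved mechanical energy $E(x_1,x_2)=\frac{1}{2}x_2^2+\frac{u}{2}x_1^2+\frac{1}{2x_1^2}$, which is preserved for each fixed $u$ on $\mathcal{D}$. On any $X$-arc ($u=-u_1<0$) the acceleration $\dot{x}_2=u_1 x_1+1/x_1^3$ is strictly positive on $\mathcal{D}$, so $x_2$ is strictly monotone there and the arc meets $\{x_2=0\}$ at most once. The $Y$-arc ($u=u_2\geq 1$) through $(1,0)$ intersects the axis only at the positive roots of $u_2 x_1^4-(u_2+1)x_1^2+1=0$, namely $x_1=1$ and $x_1=1/\sqrt{u_2}\leq 1$; that orbit is periodic and confined to $x_1\in[1/\sqrt{u_2},1]$. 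Enumerating the bang-bang concatenations compatible with these facts: if the trajectory begins with an $X$-arc, then $x_2(t)>0$ for all $t>0$, precluding both any switch and the terminal condition $x_2(T)=0$; if it begins with the $Y$-arc and never switches, then $x_1(t)\leq 1<\gamma$ throughout, contradicting $x_1(T)=\gamma$; the only other option is to traverse some portion of the $Y$-orbit and then switch, necessarily at $(1,0)$ or $(1/\sqrt{u_2},0)$, onto an $X$-arc on which, by the above monotonicity, $x_2$ then stays positive forever, ruling out any further switch and the terminal $x_2(T)=0$. Every case contradicts the prescribed terminal state $(\gamma,0)$, so no abnormal extremal can exist.

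The main obstacle is organizing this last case analysis cleanly and verifying that every possible switch location on the $Y$-orbit has been considered and that the $X$-arc emanating from each of them fails to return to the axis. The underlying computations, namely the energy identity and the two quartic root-findings, are entirely elementary, so once these phase-portrait facts about the two bang vector fields are in hand, the conclusion follows immediately.
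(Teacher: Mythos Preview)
Your proposal is correct and follows essentially the same approach as the paper: both deduce from $H\equiv 0$ with $\lambda_0=0$ that every switch of an abnormal extremal must lie on the $x_1$-axis, then perform a case analysis on the initial bang, using that along an $X$-arc $\dot{x}_2>0$ while the $Y$-arc through $(1,0)$ is confined to $x_1\le 1$, to conclude the target $(\gamma,0)$ is unreachable. Your version is somewhat more explicit about the phase-portrait geometry (computing the two axis intersections $x_1=1$ and $x_1=1/\sqrt{u_2}$ of the $Y$-orbit via the conserved energy), whereas the paper argues more tersely that the $Y$-segment must eventually switch on the axis and then reduces to the $X$-first case; but the skeleton of the argument is the same.
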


\begin{proof}
If $(x,u)$ is an abnormal extremal trajectory that has a switching at time
$t$, then, since $\lambda_{2}(t)=0$, it follows from $H=0$ that we must have
$x_{2}(t)=0$. The starting point is $(1,0)$ and suppose that $u=-u_1$ initially.
From (\ref{system2}) it is $\dot{x}_{2}>0$ so $x_2>0$ and a switching at a point with $x_{2}(t)>0$, not allowed for an abnormal extremal, is
necessary in order to reach the target point $(\gamma,0)$.
If $u=u_2$ initially,
then $\dot{x}_{2}(0)=1-u_{2}<0$ and $x_2<0$ for some time interval. During this time it is
$\dot{x}_1<0$ and consequently $x_1<1<\gamma$. A switching is necessary,
which takes place on the $x_1$-axis for an abnormal extremal. The control changes to
$u=-u_1$ and the situation is as before, where one more switching is necessary at a point with $x_{2}(t)>0$,
forbidden for abnormal extremals. Thus, there are no abnormal extremals in the optimal solutions.
\end{proof}

For normal extremals we can set $\lambda_{0}=-1$.
Then, $H=0$ implies that for any switching time $t$ we must have $\lambda
_{1}(t)x_{2}(t)=1$. For an $XY$ junction we have $\dot{\Phi}%
(t)=\lambda_{1}(t)>0$ and thus necessarily $x_{2}(t)>0$ and analogously
optimal $YX$ junctions need to lie in $\{x_{2}<0\}$. We now develop the
precise structure of the switchings in a series of Lemmas.

\begin{lemma}
[\textrm{First integrals}] A first integral of the motion along the $X$-trajectory passing through $(\alpha,0)$ is
\begin{equation}
x_{2}^{2}-u_{1}x_{1}^{2}+\frac{1}{x_{1}^{2}}=-u_{1}\alpha^{2}+\frac{1}{\alpha^{2}},\label{first_integral_1}
\end{equation}
\end{lemma}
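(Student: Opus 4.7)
The plan is to recognize that when the control is frozen at $u \equiv -u_1$, the system becomes autonomous and Newtonian, with the acceleration $u_1 x_1 + 1/x_1^3$ acting on the unit-mass particle. Since this force depends only on position $x_1$, it admits a potential $V(x_1) = -\tfrac{1}{2} u_1 x_1^2 + \tfrac{1}{2 x_1^2}$ satisfying $-V'(x_1) = u_1 x_1 + 1/x_1^3$, so the total mechanical energy $E = \tfrac{1}{2} x_2^2 + V(x_1)$ is a first integral. Multiplying by $2$ gives the stated invariant $x_2^2 - u_1 x_1^2 + 1/x_1^2$.

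To avoid invoking mechanical intuition, I would simply verify conservation by direct differentiation along trajectories of $\dot{x}_1 = x_2$, $\dot{x}_2 = u_1 x_1 + 1/x_1^3$:
\begin{equation*}
\frac{d}{dt}\!\left(x_{2}^{2} - u_{1} x_{1}^{2} + \frac{1}{x_{1}^{2}}\right)
= 2 x_{2}\dot{x}_{2} - 2 u_{1} x_{1}\dot{x}_{1} - \frac{2\dot{x}_{1}}{x_{1}^{3}},
\end{equation*}
and after substituting the dynamics the three terms cancel identically, confirming that the quantity is constant along any $X$-trajectory in $\mathcal{D}$ (where $x_1 > 0$ ensures smoothness of $1/x_1^2$).

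Finally, to fix the constant for the $X$-trajectory passing through the point $(\alpha,0)$, I would evaluate the invariant at that point: $x_2 = 0$ and $x_1 = \alpha$ yield $-u_1 \alpha^2 + 1/\alpha^2$, which is the right-hand side of \eqref{first_integral_1}. No step here is subtle; the only thing worth flagging is that invariance of $\mathcal{D}$, already established earlier via the repulsive $1/x_1^3$ term, is needed so that the first integral is defined along the whole trajectory and uniquely determines its level set.
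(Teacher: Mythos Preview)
Your proof is correct and takes essentially the same approach as the paper, which simply says ``Use the system equations (\ref{system1}) and (\ref{system2})''; you have merely written out explicitly the direct differentiation that this one-line instruction implies, together with the evaluation at $(\alpha,0)$ to fix the constant. The mechanical-energy interpretation you add is consistent with the Newtonian viewpoint the paper itself invokes, so there is no substantive difference.
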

while a first integral of the motion along the $Y$-trajectory passing through $(\beta,0)$ is
\begin{equation}
x_{2}^{2}+u_{2}x_{1}^{2}+\frac{1}{x_{1}^{2}}=u_{2}\beta^{2}+\frac{1}{\beta^{2}}.\label{first_integral_2}
\end{equation}

\begin{proof}
Use the system equations (\ref{system1}) and (\ref{system2}).
\end{proof}

\begin{lemma}
[\textrm{Inter-switching time}]\label{switch_time} Let $p=(x_{1},x_{2})$ be a
switching point and $\tau$ denote the time to reach the next switching point
$q$. If $\overrightarrow{pq}$ is a $Y$-trajectory, then
\begin{equation}
\sin(2\sqrt{u_{2}}\tau)=-\frac{2\sqrt{u_{2}}x_{1}x_{2}}{x_{2}^{2}+u_{2}%
x_{1}^{2}},\quad\cos(2\sqrt{u_{2}}\tau)=\frac{x_{2}^{2}-u_{2}x_{1}^{2}}%
{x_{2}^{2}+u_{2}x_{1}^{2}}\label{par_2}%
\end{equation}
while, if $\overrightarrow{pq}$ is an $X$-trajectory, then
\begin{equation}
\sinh(2\sqrt{u_{1}}\tau)=-\frac{2\sqrt{u_{1}}x_{1}x_{2}}{x_{2}^{2}-u_{1}%
x_{1}^{2}},\quad\cosh(2\sqrt{u_{1}}\tau)=\frac{x_{2}^{2}+u_{1}x_{1}^{2}}%
{x_{2}^{2}-u_{1}x_{1}^{2}}.\label{par_1}%
\end{equation}

\end{lemma}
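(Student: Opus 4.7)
Both halves of the lemma are proved by an identical scheme, so I would present the $Y$-trajectory case in detail and indicate the modification for the $X$-trajectory at the end. My plan is to (i) integrate the state equation in closed form along a constant-$u$ arc using the first integrals from the previous lemma, (ii) translate the condition ``$\lambda_2$ vanishes again at time $\tau$'' into a single-angle tangent identity using the adjoint equation, and (iii) apply double-angle formulas to obtain the stated expressions.

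For step (i), I would set $y = x_1^2$ and use (\ref{system1})--(\ref{system2}) with $u = u_2$ together with the first integral $E = x_2^2 + u_2 x_1^2 + 1/x_1^2$ from (\ref{first_integral_2}) to derive the linear ODE $\ddot y + 4u_2 y = 2E$, whose solution is $y(t) = P + A\cos(2\omega(t - t_0))$ with $\omega = \sqrt{u_2}$, $P = E/(2u_2)$ and $P^2 - A^2 = 1/u_2$. Writing $\theta(t) = 2\omega(t - t_0)$ yields the closed forms $x_1^2 = P + A\cos\theta$ and $x_1 x_2 = \dot y/2 = -A\omega\sin\theta$. For step (ii), I would invoke (\ref{adjoint}), which gives $\ddot\lambda_2 + (u_2 + 3/x_1^4)\lambda_2 = 0$ --- an equation also satisfied by $x_2$, as follows by differentiating (\ref{system2}) (the variational equation along the flow). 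A second linearly independent solution $\psi(t)$ can then be produced by varying the trajectory with respect to $E$, yielding the closed form $\psi \propto (A + P\cos\theta)/x_1$. Writing $\lambda_2 = c_1 x_2 + c_2 \psi$ and imposing $\lambda_2(0) = \lambda_2(\tau) = 0$ at phases $\theta_1$ and $\theta_2 = \theta_1 + 2\omega\tau$, sum-to-product identities reduce the resulting boundary equation to $\cos((\theta_1 + \theta_2)/2) = -(P/A)\cos(\omega\tau)$. Substituting $A\cos\theta_1 = x_{1,p}^2 - P$ and $A\sin\theta_1 = -x_{1,p}x_{2,p}/\omega$ collapses this to the single tangent identity
\[
\tan(\omega\tau) = -\frac{\sqrt{u_2}\, x_{1,p}}{x_{2,p}}.
\]

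Step (iii) is then immediate: plugging $t = \tan(\omega\tau)$ into $\sin(2\omega\tau) = 2t/(1+t^2)$ and $\cos(2\omega\tau) = (1-t^2)/(1+t^2)$ produces exactly (\ref{par_2}). For the $X$-trajectory, the same derivation with $u = -u_1$ yields $\ddot y - 4u_1 y = 2E'$ using (\ref{first_integral_1}); the solutions are hyperbolic, the analogue of the tangent identity is $\tanh(\sqrt{u_1}\tau) = -\sqrt{u_1}\,x_{1,p}/x_{2,p}$, and the hyperbolic double-angle formulas produce (\ref{par_1}).

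The main obstacle I expect is step (ii): exhibiting the second solution of the adjoint ODE in closed form --- equivalently the correct second variational solution --- is where all the nontrivial structure enters. The sum-to-product collapse and the subsequent tangent identity are then routine, though some care with sign conventions is required to pick out the first positive crossing of $\lambda_2$ and to match the junction type (e.g.\ $x_{2,p} > 0$ at an $XY$ junction initiating a $Y$-arc).
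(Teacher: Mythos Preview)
Your approach is correct and arrives at the same tangent identity $\tan(\sqrt{u_2}\,\tau)=-\sqrt{u_2}\,x_1/x_2$ that underlies the paper's formula, but the route is genuinely different from the one the authors take. The paper proves the lemma \emph{geometrically}, via the conjugate-point formalism for bang-bang extremals: at both switching times the multiplier annihilates $g$, so one pulls $g(q)$ back to $p$ along the variational flow and asks when $(e^{-\tau Y})_*\,g(e^{\tau Y}(p))$ is parallel to $g(p)$. The crucial observation is that the Lie algebra generated by $f$ and $g$ is finite-dimensional (indeed $[f,[f,g]]=2f$ and $[g,[f,g]]=-2g$), which lets them sum the Campbell--Hausdorff-type series $e^{\tau\,\mathrm{ad}\,Y}(g)$ in closed form; the parallelism condition then reads $\sqrt{u_2}\,x_1\sin(2\sqrt{u_2}\tau)+x_2[1-\cos(2\sqrt{u_2}\tau)]=0$, which is your tangent identity after a half-angle substitution.

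Your argument is instead purely analytical: you integrate the $y=x_1^2$ equation explicitly, recognise that $\lambda_2$ and $x_2$ satisfy the same second-order scalar ODE, manufacture a second independent solution by differentiating the trajectory with respect to the energy $E$, and impose a Wronskian-type vanishing condition. This is more elementary---no Lie brackets, no adjoint representation---and as a by-product gives an explicit phase-angle parametrisation of the arc that you could reuse later. The paper's method, on the other hand, exposes \emph{why} the inter-switching time depends only on $x_2/x_1$: it is a direct consequence of the $\mathfrak{sl}(2)$-type bracket relations, and the same machinery would apply verbatim to any planar system with that algebraic structure. The obstacle you flag in step~(ii) is not actually a problem: the $E$-variation does produce the solution $(A+P\cos\theta)/x_1$ you need, and the sum-to-product collapse goes through exactly as you sketch.
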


Note that the inter-switching times depend only on the ratio $x_2/x_1$.

\begin{proof}
These formulas are obtained as an application of the concept of a
``conjugate point" for bang-bang controls \cite{Su1}.
Without loss of generality assume that the trajectory passes through $p$ at
time $0$ and is at $q$ at time $\tau$. Since $p$ and $q$ are switching points,
the corresponding multipliers vanish against the control vector field $g$ at
those points, i.e., $\langle\lambda(0),g(p)\rangle=\langle\lambda
(\tau),g(q)\rangle=0$. We need to compute what the relation $\langle
\lambda(\tau),g(q)\rangle=0$ implies at time $0$. In order to do so, we move
the vector $g(q)$ along the $Y$-trajectory backward from $q$ to $p$. This is
done by means of the solution $w(t)$ of the variational equation along the
$Y$-trajectory with terminal condition $w(\tau)=g(q)$ at time $\tau$. Recall
that the variational equation along $Y$ is the linear system $\dot{w}=Aw$
where $A$ is given in (\ref{adjoint}). Symbolically, if we denote by
$e^{tY}(p)$ the value of the $Y$-trajectory at time $t$ that starts at the
point $p$ at time $0$ and by $(e^{-tY})_{\ast}$ the backward evolution under
the linear differential equation $\dot{w}=Aw$, then we can represent this
solution in the form
\begin{align}
w(0)&=(e^{-\tau Y})_{\ast}w(\tau)=(e^{-\tau Y})_{\ast}g(q)\nonumber\\&=(e^{-\tau Y})_{\ast}g(e^{\tau
Y}(p))=(e^{-\tau Y})_{\ast}\circ
g\circ e^{\tau Y}(p).\nonumber
\end{align}
Since the
\textquotedblleft adjoint equation\textquotedblright\ of the Maximum Principle
is precisely the adjoint equation to the variational equation, it follows that
the function $t\mapsto\langle\lambda(t),w(t)\rangle$ is constant along the
$Y$-trajectory. Hence $\langle\lambda(\tau),g(q)\rangle=0$ implies that
\[
\langle\lambda(0),w(0)\rangle=\langle\lambda(0),(e^{-\tau Y})_{\ast}g(e^{\tau
Y}(p))\rangle=0
\]
as well. But the non-zero multiplier $\lambda(0)$ can only be orthogonal to
both $g(p)$ and $w(0)$ if these vectors are parallel, $g(p)\Vert
w(0)=(e^{-\tau Y})_{\ast}g(e^{\tau Y}(p))$. It is this relation that defines
the switching time.

It remains to compute $w(0)$. For this we make use of the well-known relation \cite{Jurdjevic97}
\begin{equation}
(e^{-\tau Y})_{\ast}\circ g\circ e^{\tau Y}=e^{\tau\,adY}(g)\label{adj}%
\end{equation}
where the operator $adY$ is defined as $adY(g)=[Y,g]$, with $[,]$ denoting the
Lie bracket of the vector fields $Y$ and $g$. 
For our system, the Lie algebra $\mathcal{L}$
generated by the fields $f$ and $g$ actually is finite dimensional: we have
\[
\lbrack f,g](x)=\left(
\begin{array}
[c]{c}%
x_{1}\\
-x_{2}%
\end{array}
\right)
\]
and the relations
\[
\lbrack f,[f,g]]=2f,\qquad\lbrack g,[f,g]]=-2g
\]
can be directly verified. Using these relations and the analyticity of the
system, $e^{t\,adY}(g)$ can be calculated in closed form from the expansion
\begin{equation}
e^{t\,adY}(g)=\sum_{n=0}^{\infty}\frac{t^{n}}{n!}\,ad\,^{n}Y(g),
\end{equation}
where, inductively, $ad^{n}Y(g)=[Y,ad^{n-1}Y(g)]$.
It is not hard to show that for $n=0,1,2,\ldots$, we have that
\[
ad\,^{2n+1}Y(g)=(-4u_{2})^{n}[f,g]
\]
and
\[
ad\,^{2n+2}Y(g)=2(-4u_{2})^{n}(f-u_{2}g),
\]
so that
\begin{align}
e^{t\,adY}(g)&=g+\sum_{n=0}^{\infty}\frac{t^{2n+1}}{(2n+1)!}\,(-4u_{2}%
)^{n}[f,g]\nonumber\\&+\sum_{n=0}^{\infty}\frac{2t^{2n+2}}{(2n+2)!}\,(-4u_{2})^{n}%
(f-u_{2}g).\nonumber
\end{align}
By summing the series appropriately we obtain
\begin{align}
e^{t\,adY}(g)&=g+\frac{1}{2\sqrt{u_{2}}}\sin(2\sqrt{u_{2}}t)[f,g]\nonumber\\&+\frac
{1}{2u_{2}}[1-\cos(2\sqrt{u_{2}}t)](f-u_{2}g).\nonumber
\end{align}
Hence the field $w(0)=(e^{-\tau Y})_{\ast}g(e^{\tau Y}(p))$ is
parallel to $g(p)=(0,-x_{1})^{T}$ if and only if
\begin{equation}
\sqrt{u_{2}}x_{1}\sin(2\sqrt{u_{2}}\tau)+x_{2}\left[  1-\cos(2\sqrt{u_{2}}%
\tau)\right]  =0.\nonumber
\end{equation}
Hence
\begin{equation}
\sin(2\sqrt{u_{2}}\tau)=-\frac{x_{2}}{\sqrt{u_{2}}x_{1}}[1-\cos(2\sqrt{u_{2}%
}\tau)]
\end{equation}
from which (\ref{par_2}) follows. Note that the solution $\cos(2\sqrt{u_{2}%
}\tau)=1$ is rejected because it corresponds to $\tau=0$ or $\tau=\pi/\sqrt{u_{2}}$,
the latter being the period of the closed trajectory.

Working analogously for an $X$-trajectory  we obtain (\ref{par_1}).
\end{proof}

\begin{figure}[t]
\centering
\includegraphics[width=0.8\linewidth]{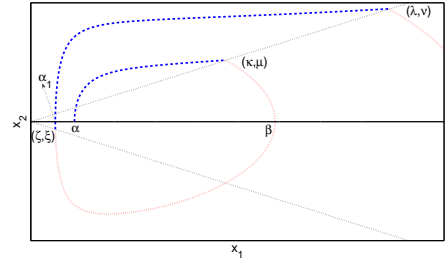}\caption{Consecutive switching points lie on two opposite-slope lines through the origin. Blue dashed curves correspond to $X$-segments, red dotted curves to $Y$-segments.}%
\label{fig:threeswitchings}%
\end{figure}

\begin{lemma}
[\textrm{Main technical point}]\label{prop:ratio} The ratio of the coordinates
of consecutive switching points has constant magnitude but alternating sign,
while these points are not symmetric with respect to the $x_{1}$-axis.
\end{lemma}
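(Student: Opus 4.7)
\emph{Plan.} I treat a $Y$-arc joining two consecutive switching points $p$ (with $x_2(p)>0$, an $XY$-junction) and $q$ (with $x_2(q)<0$, a $YX$-junction); the $X$-arc case will follow by an identical argument with hyperbolic functions replacing trigonometric ones. My approach is to apply Lemma \ref{switch_time} \emph{twice} along the same arc: once in the natural direction from $p$ to $q$, and once from a mirror arc obtained by the reflection--time-reversal symmetry of the $Y$-dynamics.

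The key observation is that the system $\dot x_1=x_2,\ \dot x_2=-u_2x_1+1/x_1^3$ is invariant under the involution $(x_1,x_2,t)\mapsto(x_1,-x_2,-t)$. A direct computation shows that the adjoint equation and the identity $H\equiv 0$ transform compatibly under $(\lambda_1,\lambda_2)\mapsto(-\lambda_1,\lambda_2)$ (with $\lambda_0=-1$ preserved). Consequently, there is a second extremal $Y$-arc joining $\tilde p:=(x_1(q),-x_2(q))$ to $\tilde q:=(x_1(p),-x_2(p))$ in the same time $\tau$, with switching conditions satisfied at both endpoints.

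Now applying the formulas of Lemma \ref{switch_time} at $p$ gives $\sin(2\sqrt{u_2}\tau)$ and $\cos(2\sqrt{u_2}\tau)$ in terms of $r_p:=x_2(p)/x_1(p)$; applying the same formulas at $\tilde p$ gives the same two quantities in terms of $r_q:=x_2(q)/x_1(q)$, with a sign flip on the sine coming from the reflection. Equating the two expressions and dividing numerator and denominator in each by $x_1^2$ to work with ratios, the cosine identity collapses to $r_p^2=r_q^2$, and the sine identity factors as
\[
(r_p+r_q)(r_p r_q+u_2)=0.
\]
Since the $XY$-- and $YX$--junction signs give $r_p>0$ and $r_q<0$, the case $r_p=r_q$ is ruled out, forcing $r_q=-r_p$, which is the constant-magnitude, alternating-sign claim.

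For the non-symmetry: using Pinney's integration of the Ermakov equation on a $Y$-arc one has $x_1^2(t)=P+R\cos(2\sqrt{u_2}(t-t_0))$ with $P^2-R^2=1/u_2$, so symmetry about the $x_1$-axis amounts to $\psi_q\equiv-\psi_p\pmod{2\pi}$. Inserting this into the switching condition yields the codimension-one relation $\cos(\sqrt{u_2}\tau)=\pm R/P$ on the arc's Pinney parameters; this relation fails for the switching configurations arising in the optimal synthesis and can be excluded by direct inspection. I expect this last step to be the main obstacle, since the non-symmetry does not follow from the switching formulas alone and must be verified against the explicit parameters $(P,R)$ dictated by the boundary conditions.
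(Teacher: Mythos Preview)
Your argument for the alternating-sign ratio is essentially the paper's own proof: both exploit the time-reversal/reflection symmetry $(t,x_1,x_2)\mapsto(-t,x_1,-x_2)$ of the constant-$u$ dynamics to apply the inter-switching formulas of Lemma~\ref{switch_time} from both ends of the same arc and compare. Your version is more explicit algebraically (the factorisation $(r_p+r_q)(r_pr_q+u_2)=0$ is a nice touch, though the cosine relation together with the junction signs already forces $r_q=-r_p$), but the idea is the same.

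Where you diverge from the paper is the non-symmetry claim, and here there is a real gap. You set up Pinney's representation $x_1^2(t)=P+R\cos(2\sqrt{u_2}(t-t_0))$, reduce symmetry to a codimension-one constraint on $(P,R,\tau)$, and then say it ``can be excluded by direct inspection'' --- but you do not carry out that inspection, and you yourself flag it as the main obstacle. This is not a completed proof. The paper bypasses the computation entirely with a one-line optimality argument: if the two consecutive switching points were reflections of each other, $(\zeta,\xi)=(\kappa,-\mu)$, then by the same time-reversal symmetry the \emph{next} bang arc would retrace the \emph{previous} one, and the controlled trajectory would return to a point it had already visited. Such a loop wastes time and is therefore excluded for a time-optimal trajectory. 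You already have all the ingredients for this argument (the symmetry, the junction signs); you just need to invoke optimality rather than chase the Pinney parameters.
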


\begin{proof}
Consider the trajectory shown in Fig.\ \ref{fig:threeswitchings}, with
switching points $(\kappa,\mu), (\zeta,\xi)$ and $(\lambda,\nu)$.
Starting from $(\kappa,\mu)$ and integrating the equations of motion (\ref{system1}) and (\ref{system2}) for the inter-switching time given in (\ref{par_2}), we can find the coordinates of the next switching point and then show that $\xi/\zeta=-\mu/\kappa$ while $(\zeta,\xi)\neq (\kappa,-\mu)$ \cite{Stefanatos11}. Subsequently, integrating the equations for the inter-switching time given in (\ref{par_1}), we can also show that $\nu/\lambda=-\xi/\zeta$ and $(\lambda,\nu)\neq (\zeta,-\xi)$ \cite{Stefanatos11}.

Alternatively, we present a more elegant proof based on the symmetries of the system. Observe that the transformation $(t,x_1,x_2)\rightarrow (-t,x_1,-x_2)$ leaves the system (\ref{system1}) and (\ref{system2}) invariant for constant $u$. So, starting from $(\zeta,\xi)$ and running the system backwards to the next (previous in the forward direction) switching point $(\kappa,\mu)$, the switching time is given by a relation similar to (\ref{par_2}), with $x_2/x_1=-\xi/\zeta$. But this switching time is the same as in the forward direction, where $x_2/x_1=\mu/\kappa$ in (\ref{par_2}). Then, using (\ref{par_2}), it is not hard to see that $\xi/\zeta=-\mu/\kappa$. Note that
$\zeta=\kappa$ would imply $\xi=-\mu$, i.e. returning to the same point on the $x_1$-axis with opposite velocity before switching again, which is obviously not time optimal. Thus, it is $\zeta\neq\kappa$ in general so $(\zeta,\xi)\neq (\kappa,-\mu)$.

\end{proof}

\begin{figure}[t]
\centering
\includegraphics[width=0.8\linewidth]{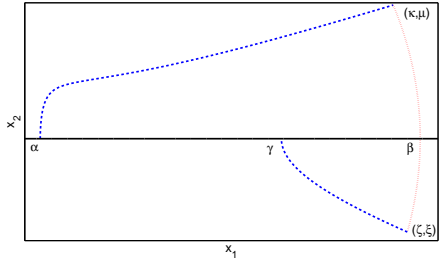}\caption{Blue dashed curves correspond to $X$-segments, red dotted curves to $Y$-segments.}%
\label{fig:forbidden}%
\end{figure}



In the following proposition we use Lemma \ref{prop:ratio} to determine the
form of the optimal trajectory.

\begin{proposition}
[\textrm{Form of the optimal trajectory}]\label{prop:spiral} The optimal
trajectory can have the one-switching form $XY$ or the spiral form
$YX\ldots YXY$ with an even number of switchings.
\end{proposition}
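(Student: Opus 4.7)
The plan is to combine the geometric constraint from Lemma~\ref{prop:ratio}, which places consecutive switching points on two opposite-slope lines through the origin, with boundary analysis at $S=(1,0)$ and $E=(\gamma,0)$, to enumerate all admissible bang-bang concatenations and then rule out those not of the claimed forms.

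First, I would set $\ell_\pm:x_2=\pm k x_1$ for some $k>0$, so that by Lemma~\ref{prop:ratio} every $XY$-switch lies on $\ell_+\cap\{x_2>0\}$ and every $YX$-switch on $\ell_-\cap\{x_2<0\}$. At $S$, the relation $\dot x_2(0)=1-u$ shows that the initial control $u=-u_1$ drives the trajectory into $\{x_2>0\}$, so the first switch (if any) is of type $XY$, whereas $u=u_2$ drives it into $\{x_2<0\}$, making the first switch of type $YX$. At $E$, the first integrals (\ref{first_integral_1})--(\ref{first_integral_2}) identify $(\gamma,0)$ as the unique $x_1$-minimum of the $X$-trajectory through it and, using $\gamma>1\ge 1/\sqrt{u_2}$, as the unique $x_1$-maximum of the $Y$-trajectory through it; consequently a terminating $X$-segment must arrive at $E$ from $\{x_2<0\}$ and a terminating $Y$-segment from $\{x_2>0\}$. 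Together with the alternation of switch types, admissible sequences partition into the four classes $X\cdots Y$, $X\cdots X$, $Y\cdots Y$, $Y\cdots X$, and the proposition claims that only $XY$ (a one-switch element of the first class) and $YX\cdots YXY$ (the even-switch subfamily of the third) can be optimal.

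The remaining step is to eliminate the other three families. From (\ref{first_integral_1})--(\ref{first_integral_2}) I would derive the product relations for the abscissas of consecutive switching points, $a_n a_{n+1}=1/\sqrt{k^2+u_2}$ across a $Y$-arc and $a_n a_{n+1}=1/\sqrt{k^2-u_1}$ across an $X$-arc (which forces $k^2>u_1$), so that the switching coordinates form a two-term geometric spiral indexed by $k$. Combined with the closed-form inter-switching times of Lemma~\ref{switch_time}, this writes each candidate sequence as a one-parameter family in $k$ subject to the two endpoint conditions. For each forbidden family I would then exhibit, as in Fig.~\ref{fig:forbidden}, an $XY$- or $YX$-pair whose removal produces an admissible trajectory with the same endpoints but strictly smaller total time, contradicting optimality.

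The chief obstacle is making this last shortcut step rigorous. The slope $k$ is not a free parameter but is fixed by the endpoints together with the switch count, so removing a switching pair is not a continuous deformation but requires re-solving the boundary-value problem with a different $k$. I would address this by combining the monotonicity of $\tau_X(k)$ and $\tau_Y(k)$ obtained from (\ref{par_1})--(\ref{par_2}) with the geometric recursion above to certify that the time saved by dropping a pair strictly dominates the time cost of re-adjusting $k$ to restore the boundary data. The parity count over the switch-type alternation then isolates exactly the two forms stated in the proposition.
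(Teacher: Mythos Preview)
Your setup is sound—the placement of $XY$-switches on $\ell_+$ and $YX$-switches on $\ell_-$, the endpoint analysis, and the product relations $a_n a_{n+1}=1/\sqrt{k^2+u_2}$ (across $Y$) and $a_n a_{n+1}=1/\sqrt{k^2-u_1}$ (across $X$) are all correct and in fact match identities the paper uses. The genuine gap is your elimination step. You propose a time-comparison argument (remove a switching pair, re-solve for $k$, show the new trajectory is faster) and you correctly flag that the re-adjustment of $k$ is the obstacle. That obstacle is real: the monotonicity of $\tau_X(k)$ and $\tau_Y(k)$ alone does not control the change in the initial and final partial arcs, and there is no evident reason the net balance should come out with the right sign. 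As written, this step is a hope rather than an argument.

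The paper avoids this difficulty entirely by observing that the forbidden families are not merely slower—they are \emph{incompatible with the extremal conditions}. Using exactly your product relation across a $Y$-arc (with $s=k^2$), the paper notes that for an $XYX$ extremal from $(1,0)$ to $(\gamma,0)$ the two switching abscissas satisfy $\kappa^2\zeta^2=1/(s+u_2)<1$ because $u_2\ge 1$, while geometrically $\kappa>1$ and $\zeta>\gamma>1$, a contradiction. For an $X$-start with more switchings, the inter-switching formula (\ref{par_1}) forces $s/u_1>1$ on any internal $X$-arc, whereas the first integral (\ref{first_integral_1}) on the initial $X$-arc from $(1,0)$ gives $s/u_1=(u_1\kappa^2+1)(\kappa^2-1)/(u_1\kappa^4)<1$ since $u_1\ge 1$. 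For a terminal $X$-segment with more than two switchings, $s>u_1$ makes the left side of $(s-u_1)\zeta^2+1/\zeta^2=-u_1\gamma^2+1/\gamma^2$ positive while the right side is negative. Each case is a two-line algebraic obstruction, not a variational comparison. Since you already have the needed product identities in hand, you are one step away from the paper's argument; you should replace the shortcut plan with these direct contradictions.
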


\begin{proof}
We first show that when the optimal trajectory has more than one switching,
it cannot start with an $X$-segment. For just two switchings, consider the
trajectory $XYX$ depicted in Fig.\ \ref{fig:forbidden}, where $\alpha=1$
(starting point), $(\gamma,0), \gamma>1$ is the target point and $(\kappa
,\mu),(\zeta,\xi)$ are the switching points. Since both of the switching
points belong to the $Y$-segment passing through $(\beta,0)$, their
coordinates satisfy (\ref{first_integral_2}). If we denote by $s$ the common
ratio $\mu^{2}/\kappa^{2}=\xi^{2}/\zeta^{2}=s$,
then both $\kappa,\zeta$ satisfy the equation
\begin{equation}
(s+u_{2})x_{1}^{4}-(u_{2}\beta^{2}+\frac{1}{\beta^{2}})x_{1}^{2}+1=0,\nonumber
\end{equation}
so
\begin{equation}
\kappa^{2}\zeta^{2}=\frac{1}{s+u_{2}}<1,\nonumber
\end{equation}
since $u_{2}\geq1,s>0$. But also $\kappa^{2}\zeta^{2}>1$,
since $\kappa^{2}>1$ and $\zeta^{2}>\gamma^{2}>1$. Thus this trajectory cannot
be optimal.

For more switchings, consider the case shown in Fig
\ref{fig:threeswitchings}, where now $\alpha=1$, and use $s$ to denote the
common ratio of the squares of the coordinates at the switching points. If $\tau$
is the switching time between $(\zeta,\xi)$ and $(\lambda,\nu)$, then from
(\ref{par_1}) we obtain
\begin{equation}
\frac{s}{u_{1}}=\frac{\cosh(2\tau\sqrt{u_{1}})+1}{\cosh(2\tau\sqrt{u_{1}})-1}>1.\nonumber
\end{equation}
But $s/u_1=\mu^2/(u_1\kappa^2)$, and from (\ref{first_integral_1}) we find ($\alpha=1$)
\begin{equation}
\frac{s}{u_{1}}=\frac{(u_{1}\kappa^{2}+1)(\kappa^{2}-1)}{u_{1}\kappa^{4}%
}<1\Leftrightarrow(u_{1}-1)\kappa^{2}>-1,\nonumber
\end{equation}
since $u_{1}\geq1$. Thus if the optimal trajectory has more than one
switching, it needs to start with a $Y$-segment.

We next show that the optimal trajectory reaches the target point $(\gamma,0),
\gamma>1$ with a $Y$-segment. This is obviously the case for one switching,
and also for two switchings since only the $YXY$ trajectory is permitted
(the $XYX$ was excluded above). For more than two switchings consider the
situation shown in Fig.\ \ref{fig:forbidden}. It is $\mu^{2}/\kappa^{2}=\xi
^{2}/\zeta^{2}=s$ and $s>u_{1}$ since at least one $YXY$-segment is included
in the trajectory. Point $(\zeta,\xi)$ belongs to the final $X$-segment ending
to $(\gamma,0)$, so
\begin{equation}
(s-u_{1})\zeta^{2}+\frac{1}{\zeta^{2}}=-u_{1}\gamma^{2}+\frac{1}{\gamma^{2}}.\nonumber
\end{equation}
The left hand side is positive, since $s>u_{1}$, while the right had side is
negative, since $\gamma>1,u_{1}\geq1$. Thus the optimal trajectory reaches the
target point with a $Y$-segment.
\end{proof}

\begin{corollary}
\label{uone}
For $|u|\leq1$ the optimal solution has only one switching.
\end{corollary}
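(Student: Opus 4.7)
My plan is to combine the hypothesis $|u|\leq 1$ of the corollary with the standing assumption $u_1,u_2\geq 1$ from Problem \ref{problem1}. Together these force $u_1 = u_2 = 1$. By Proposition \ref{prop:spiral} the optimal trajectory must then be either the one-switching $XY$ form or the spiral form $YX\ldots YXY$ with an even number of switchings, so it suffices to rule out the spiral case in this regime.

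The key observation is that when $u_2 = 1$ the initial point $(1,0)$ is an equilibrium of the $Y$-field: at $(1,0)$ we have $\dot x_1 = x_2 = 0$ and $\dot x_2 = -u_2 \cdot 1 + 1/1^3 = 0$. Equivalently, specializing the $Y$-first-integral (\ref{first_integral_2}) to $\beta=1$ and $u_2=1$ yields
\[
x_2^2 + x_1^2 + \frac{1}{x_1^2} = 2,
\]
and since $x_1^2 + 1/x_1^2 \geq 2$ by AM-GM with equality only at $x_1=1$, the entire $Y$-orbit through $(1,0)$ collapses to the single point $(1,0)$. Hence no non-trivial $Y$-segment can emanate from the initial state, the spiral form is excluded, and the optimal trajectory must be of the $XY$ type with exactly one switching.

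There is essentially no obstacle in this argument because Proposition \ref{prop:spiral} has already carried out the heavy lifting; the corollary only needs the remark that the initial state is a fixed point of the $Y$-flow in this parameter regime. An alternative route, which I would mention only if space permits, would be to use Lemma \ref{switch_time} to observe that any $X$-segment between two switching points requires the common squared-coordinate ratio to satisfy $s>u_1=1$, and then combine this with the first integral along the initial $Y$-segment from $(1,0)$ to reach a direct numerical contradiction; the equilibrium viewpoint is simply the cleaner formulation of the same obstruction.
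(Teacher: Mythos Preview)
Your argument is correct and coincides with the paper's own proof: both observe that the constraint $|u|\leq 1$ together with $u_1,u_2\geq 1$ forces $u_2=1$, so the starting point $(1,0)$ is an equilibrium of the $Y$-flow, which rules out the spiral alternative in Proposition~\ref{prop:spiral} and leaves only the $XY$ trajectory. Your AM--GM verification and the alternative via Lemma~\ref{switch_time} are pleasant elaborations, but the core idea is identical to the paper's one-line equilibrium remark.
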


\begin{proof}
For $u=u_{2}=1$ the starting point $(1,0)$ is an equilibrium point of system (\ref{system1}), (\ref{system2}). So the optimal trajectory cannot start with
a $Y$-segment. The only trajectory thus permitted is $XY$
\end{proof}

From Proposition \ref{prop:spiral} we see that the optimal trajectory can have
aside from the expected one-switching form, shown in Fig.\ \ref{fig:upper}, the
spiral form shown in Fig.\ \ref{fig:spiral}. An intuitive understanding of this
latter form can be obtained by viewing system equations (\ref{system1}), (\ref{system2}) as describing the motion of a unit mass particle with position
$x_{1}$ and velocity $x_{2}$. In light of this interpretation we see that
along a spiral trajectory the particle, instead of moving directly to the
target, goes close to $x_{1}=0$ where there is a strong repulsive potential
($1/x_{1}^{3}$) to acquire speed and reach the target point faster. In the
following theorem we calculate the transfer time for the candidate optimal trajectories.


\begin{figure}[t]
\centering
\begin{tabular}
[c]{c}%
\subfigure[$\ $]{ \label{fig:upper} \includegraphics[width=.8\linewidth]{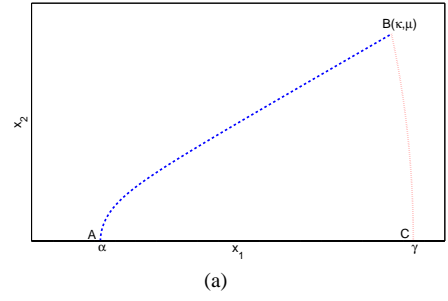}}\\
\subfigure[$\ $]{ \label{fig:spiral} \includegraphics[width=.8\linewidth]{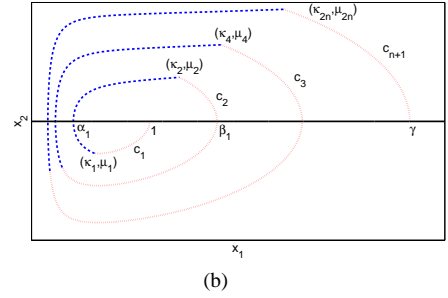}}
\end{tabular}
\caption{(a) Trajectory with one switching (zero turns) (b) Trajectory with
$n$ turns.}%
\label{fig:opt_traj_final}%
\end{figure}

\begin{theorem}
\label{prop:time} Starting from $(1,0)$, the necessary time to reach the
target point $(\gamma,0), \gamma>1$ with one switching is
\begin{align}
\label{time_0}T_{0}&=\frac{1}{\sqrt{u_{1}}}\sinh^{-1}\left(  \sqrt{\frac
{u_{1}(\gamma^{2}-1)(u_{2}\gamma^{2}-1)}{\gamma^{2}(u_{1}+u_{2})(u_{1}+1)}%
}\right)\nonumber\\ & +\frac{1}{\sqrt{u_{2}}}\sin^{-1}\left(  \sqrt{\frac{u_{2}(\gamma
^{2}-1)(u_{1}\gamma^{2}+1)}{(u_{1}+u_{2})(u_{2}\gamma^{4}-1)}}\right)  .
\end{align}
The necessary time to reach the target with $n$ turns ($2n$ switchings) is
\begin{equation}
\label{time_n}T_{n}=T_{I}+nT_{X}+(n-1)T_{Y}+T_{F},
\end{equation}
where
\begin{align}
\label{time_in}T_{I}  &  = \frac{1}{2\sqrt{u_{2}}}\cos^{-1}\left(
-\frac{sc_{1}+u_{2}\sqrt{c_{1}^{2}-4(s+u_{2})}}{(s+u_{2})\sqrt{c_{1}%
^{2}-4u_{2}}}\right),\\
\label{time_fi}
T_{F}  &  = \frac{1}{2\sqrt{u_{2}}}\cos^{-1}\left(  \frac{-sc_{n+1}+u_{2}%
\sqrt{c_{n+1}^{2}-4(s+u_{2})}}{(s+u_{2})\sqrt{c_{n+1}^{2}-4u_{2}}}\right)  ,
\end{align}
\begin{align}
\label{switch_X}T_{X}  &  = \frac{1}{2\sqrt{u_{1}}}\cosh^{-1}\left(
\frac{s+u_{1}}{s-u_{1}}\right),\\
\label{switch_Y}
T_{Y}  &  = \frac{1}{2\sqrt{u_{2}}}\left(  2\pi-\cos^{-1}\left(  \frac
{s-u_{2}}{s+u_{2}}\right)  \right)  ,
\end{align}
\begin{align}
\label{c_1}c_{1}  &  =u_{2}+1,\\
\label{c_n}
c_{n+1}  &  =u_{2}\gamma^{2}+\frac{1}{\gamma^{2}},
\end{align}
and $s$ is the solution of the transcendental equation
\begin{equation}
\label{transcendental}\frac{c_{1}+\sqrt{c_{1}^{2}-4(s+u_{2})}}{c_{n+1}%
+\sqrt{c_{n+1}^{2}-4(s+u_{2})}}=\left(  \frac{s-u_{1}}{s+u_{2}}\right)  ^{n}%
\end{equation}
in the interval $u_{1}<s\leq(u_{2}-1)^{2}/4$. The constants $c_{1}$ and
$c_{n+1}$ characterize the first and the last $Y$-segments, respectively, of
the trajectory. The number of turns satisfies the following inequality
\begin{equation}
\label{turns}
n\leq\left[\frac{T_0}{T_X(s_+)}\right],
\end{equation}
where $s_+=(u_{2}-1)^{2}/4$ and $[\,]$ denotes the integer part.
\end{theorem}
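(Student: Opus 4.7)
The plan is to handle $T_0$ and $T_n$ by the same device: along any constant-control arc, the change of variable $y=x_1^{2}$ turns the first integrals (\ref{first_integral_1}) and (\ref{first_integral_2}) into $\dot y^{2}=4u_1 y^{2}+4C_X y-4$ on an $X$-arc and $\dot y^{2}=-4u_2 y^{2}+4C_Y y-4$ on a $Y$-arc, with explicit solutions
\begin{equation*}
y(t)=\frac{-C_X+\sqrt{C_X^{2}+4u_1}\,\cosh(2\sqrt{u_1}(t-t_0'))}{2u_1}
\end{equation*}
and
\begin{equation*}
y(t)=\frac{C_Y+\sqrt{C_Y^{2}-4u_2}\,\cos(2\sqrt{u_2}(t-t_0))}{2u_2}.
\end{equation*}
Traversal times along any arc are then read off by matching endpoint $y$-values.

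For $T_0$, I would first locate the unique switching point $(\kappa,\mu)$ by equating (\ref{first_integral_1}) at $\alpha=1$ and (\ref{first_integral_2}) at $\beta=\gamma$; eliminating $\mu^{2}$ leaves a linear equation in $\kappa^{2}$ that factors nicely as $\kappa^{2}-1=(u_2\gamma^{2}-1)(\gamma^{2}-1)/[(u_1+u_2)\gamma^{2}]$ and $\gamma^{2}-\kappa^{2}=(u_1\gamma^{2}+1)(\gamma^{2}-1)/[(u_1+u_2)\gamma^{2}]$. Plugging $y=\kappa^{2}$ into the $X$-arc parametrization (with initial data at $(1,0)$ forcing $t_0'=0$ and $C_X=1-u_1$) gives $\sinh^{2}(\sqrt{u_1}T_X^{(0)})=u_1(\kappa^{2}-1)/(u_1+1)$, the first term of (\ref{time_0}); plugging $y=\kappa^{2}$ into the $Y$-arc parametrization (with $t_0$ chosen so the turning point $(\gamma,0)$ terminates the arc, hence $C_Y=u_2\gamma^{2}+1/\gamma^{2}$) gives $\sin^{2}(\sqrt{u_2}T_Y^{(0)})=u_2\gamma^{2}(\gamma^{2}-\kappa^{2})/(u_2\gamma^{4}-1)$, the second term.

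For $T_n$, Lemma \ref{prop:ratio} furnishes a common squared ratio $s=x_2^{2}/x_1^{2}$ at every switching, so the intermediate times (\ref{switch_X}) and (\ref{switch_Y}) follow at once from Lemma \ref{switch_time} evaluated at $x_2/x_1=\pm\sqrt{s}$; the $2\pi-\cos^{-1}$ branch in (\ref{switch_Y}) is forced because the sine in (\ref{par_2}) is negative on an intermediate $Y$-arc starting from an $XY$ junction (where $x_2>0$). The initial and final $Y$-arcs require separate treatment since their outer endpoints are not switching points: their first integrals are $c_1=u_2+1$ and $c_{n+1}=u_2\gamma^{2}+1/\gamma^{2}$, and the adjacent switchings $S_1, S_{2n}$ lie where the radial lines $x_2=\mp\sqrt{s}\,x_1$ meet these orbits, i.e., at the two roots of $(s+u_2)y^{2}-cy+1=0$. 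Tracking the direction of motion and the requirement that the spiral wind $n$ times through the repulsive region forces the \emph{smaller} root at $S_1$ (the first $Y$-arc passes $y_+^{YX}$ without switching and penetrates deeper toward $x_1=0$) and the \emph{larger} root at $S_{2n}$ (so the final $Y$-arc subtends less than a half-period before reaching the turning point $(\gamma,0)$); inserting these into the $Y$-arc parametrization produces (\ref{time_in}) and (\ref{time_fi}).

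The transcendental equation (\ref{transcendental}) follows by chaining Vieta product relations at successive intermediate arcs: each $X$-orbit yields $x_1^{2}(S_{2i-1})\,x_1^{2}(S_{2i})=1/(s-u_1)$ and each intermediate $Y$-orbit yields $x_1^{2}(S_{2i})\,x_1^{2}(S_{2i+1})=1/(s+u_2)$, which telescope to $x_1^{2}(S_1)\,x_1^{2}(S_{2n})=(s+u_2)^{n-1}/(s-u_1)^{n}$; combining with the closed-form expressions for these two endpoint values and rationalizing the radicals delivers (\ref{transcendental}). The bound $s\leq(u_2-1)^{2}/4$ is precisely the discriminant condition $c_1^{2}-4(s+u_2)\geq 0$ required for $S_1$ to exist. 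Finally, (\ref{turns}) follows because $T_X(s)=(1/(2\sqrt{u_1}))\cosh^{-1}((s+u_1)/(s-u_1))$ is strictly decreasing on $(u_1,s_+]$, hence $T_n\geq nT_X(s)\geq nT_X(s_+)$; the $n$-turn spiral can beat the one-switching trajectory only when $T_n\leq T_0$, which forces $n\leq\lfloor T_0/T_X(s_+)\rfloor$. The main obstacle I anticipate is the selection of the correct root at each boundary $Y$-arc and the correct branch of each inverse trigonometric/hyperbolic function, both of which require careful case analysis of the sign of $x_2$ and the direction of traversal along each orbit.
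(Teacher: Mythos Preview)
Your approach is correct and essentially matches the paper's: both locate the one-switching point via the two first integrals, invoke the constant ratio $s$ from Lemma~\ref{prop:ratio} and the inter-switching times from Lemma~\ref{switch_time} for the intermediate arcs, and derive (\ref{transcendental}) from the product relation $\kappa_{2i-1}^{2}\kappa_{2i}^{2}=1/(s-u_{1})$ along each $X$-arc (your Vieta telescoping is precisely the paper's pairwise step, multiplied over $i$). The one place where the paper argues differently is the root selection at $S_{1}$: rather than reasoning geometrically that the spiral must ``penetrate deeper toward $x_{1}=0$,'' the paper shows algebraically that the opposite sign choice produces the inverted left-hand side of (\ref{transcendental}), which exceeds $1$ since $c_{n+1}>c_{1}$ and therefore has no solution---a cleaner resolution of exactly the branch-selection obstacle you anticipate.
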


\begin{proof}
In Fig.\ \ref{fig:upper} we show a trajectory with one switching point
$B(\kappa,\mu)$. The coordinates of this point satisfy equations (\ref{first_integral_1}) and (\ref{first_integral_2}) with $\alpha=1$ and $\beta=\gamma$, 
\begin{eqnarray}
\mu_2^2-u_1\kappa^2+\frac{1}{\kappa^2}&=&1-u_1,\nonumber\\
\mu^2+u_2\kappa^2+\frac{1}{\kappa^2}&=&u_2\gamma^2+\frac{1}{\gamma^2},\nonumber
\end{eqnarray}
from which we find
\begin{equation}
\kappa^{2}=\frac{u_{2}\gamma^{4}+1+\gamma^{2}(u_{1}-1)}{\gamma^{2}(u_{1}%
+u_{2})}.\nonumber
\end{equation}
By integrating the equations of motion (\ref{system1}) and (\ref{system2}),
we find the necessary time along each segment of the trajectory, $AB$ and $BC$.
The total transfer time is the sum of these times and is given by (\ref{time_0}). Next
consider the case with $n$ turns and $2n$ switching points $(\kappa_{j}%
,\mu_{j})$, Fig.\ \ref{fig:spiral}, with constant ratio $\mu_{j}^{2}/\kappa
_{j}^{2}=s$. The first switching point satisfies the equations
\begin{align}
\label{curv_1_b}\mu_{1}^{2}+u_{2}\kappa_{1}^{2}+\frac{1}{\kappa_{1}^{2}}  &
=c_{1},\\
\label{curv_1_a}
\mu_{1}^{2}-u_{1}\kappa_{1}^{2}+\frac{1}{\kappa_{1}^{2}}  &  =c,
\end{align}
where $c_{1}$ is given by (\ref{c_1}) and $c=-u_{1}\alpha_{1}^{2}+1/\alpha
_{1}^{2}$, while the second switching point satisfies
\begin{align}
\label{curv_2_b}\mu_{2}^{2}+u_{2}\kappa_{2}^{2}+\frac{1}{\kappa_{2}^{2}}  &
=c_{2},\\
\label{curv_2_a}
\mu_{2}^{2}-u_{1}\kappa_{2}^{2}+\frac{1}{\kappa_{2}^{2}}  &  =c,
\end{align}
where $c_{2}=u_{2}\beta_{1}^{2}+1/\beta_{1}^{2}$. The constants $c_{1}$ and $c_{2}$
characterize the first and second $Y$-segments of the trajectory, while
the constant $c$ characterizes the $X$-segment joining them. Subtracting
(\ref{curv_1_a}) from (\ref{curv_2_a}) and using Lemma \ref{prop:ratio} which
assures that $\kappa_{1}\neq\kappa_{2}$ (consecutive switching points are not
symmetric with respect to $x_{1}$-axis) we find that
\begin{equation}
\label{s_equation}s-u_{1}-\frac{1}{\kappa_{1}^{2}\kappa_{2}^{2}}=0.
\end{equation}
But from (\ref{curv_1_b}), (\ref{curv_2_b}) and the constant ratio relation we
find
\begin{align}
\kappa_{1}^{2}  &  =\frac{2}{c_{1}+\sqrt{c_{1}^{2}-4(s+u_{2})}},\nonumber\\
\kappa_{2}^{2}  &  =\frac{c_{2}+\sqrt{c_{2}^{2}-4(s+u_{2})}}{2(s+u_{2})},\nonumber
\end{align}
where, while solving the quadratic equations we used the $-$ sign for the
first and the $+$ sign for the second switching point. The choice of sign for
the first switching point will be justified below, while the choice of sign
for consecutive switching points should be alternating to avoid picking the
symmetric image of the previous point. Using these relations,
(\ref{s_equation}) takes the form
\begin{equation}
\label{transcendental2}\frac{c_{1}+\sqrt{c_{1}^{2}-4(s+u_{2})}}{c_{2}%
+\sqrt{c_{2}^{2}-4(s+u_{2})}}=\frac{s-u_{1}}{s+u_{2}}.\nonumber
\end{equation}
By repeating the above procedure for all the consecutive pairs of switching
points, we find
\begin{equation}
\label{transcendental3}\frac{c_{i}+\sqrt{c_{i}^{2}-4(s+u_{2})}}{c_{i+1}%
+\sqrt{c_{i+1}^{2}-4(s+u_{2})}}=\frac{s-u_{1}}{s+u_{2}},\,i=1,2,\ldots,n.\nonumber
\end{equation}
Multiplying the above equations we obtain (\ref{transcendental}), \emph{one}
transcendental equation for the ratio $s$. If we choose the $+$ sign in the
quadratic equation for $\kappa_{1}^{2}$, we obtain an equation similar to
(\ref{transcendental}) but with inverted left hand side. It is $c_{n+1}%
>c_{1}\Leftrightarrow(\gamma^{2}-1)(u_{2}\gamma^{2}-1)>0$ and $c_1,c_{n+1}>0$, so
\begin{equation}
\frac{c_{n+1}+\sqrt{c_{n+1}^{2}-4(s+u_{2})}}{c_{1}+\sqrt{c_{1}^{2}-4(s+u_{2}%
)}}>1>\left(  \frac{s-u_{1}}{s+u_{2}}\right)  ^{n},\nonumber
\end{equation}
and the corresponding transcendental equation has no solution. Note that the
left hand side of (\ref{transcendental}) is a decreasing function of $s$ while
the right hand side is an increasing one, so if a solution exists, it is
unique. The ratio is bounded below by the requirement $s/u_{1}>1$ and above by
$c_{1}^{2}-4(s+u_{2})\geq0\Leftrightarrow s\leq s_+=(u_{2}-1)^{2}/4$. This is also
the maximum value of $s$ on the first $Y$-segment (\ref{curv_1_b}). Once we
have calculated this ratio, we can find the time interval between consecutive
switchings using (\ref{switch_X}) for an $X$-segment and (\ref{switch_Y}) for
a $Y$-segment, relations obtained from Lemma \ref{switch_time} on the inter-switching time. Observe that the times along all intermediate $X$- (respectively $Y$-) trajectories are equal. The initial time interval $T_{I}$ (from the starting point up
to the first switching) and the final time interval $T_{F}$ (from the last
switching up to the target point) can be easily calculated and are given in (\ref{time_in}) and (\ref{time_fi}), respectively. The total duration $T_{n}$ of
the trajectory with $n$ turns joining the points $(1,0)$ and $(\gamma,0)$ is
given by (\ref{time_n}). Observe that $T_n(s)> nT_X(s)\geq nT_X(s_+)$, where the last inequality follows from the fact that $T_X$ is a decreasing function of $s$, see (\ref{switch_X}). A solution with $n$ turns can be candidate for optimality only if the number of turns is bounded as in (\ref{turns}). Otherwise we have $T_n(s)>T_0$ and the one-switching strategy is faster.
\end{proof}


Using Theorem \ref{prop:time} we can find the times $T_{n}$ for a specific
target $(\gamma, 0)$ and compare them to obtain the minimum time. 

\begin{figure}[t]
\centering
\begin{tabular}
[c]{c}%
\subfigure[$\ $]{ \label{fig:times2} \includegraphics[width=.8\linewidth]{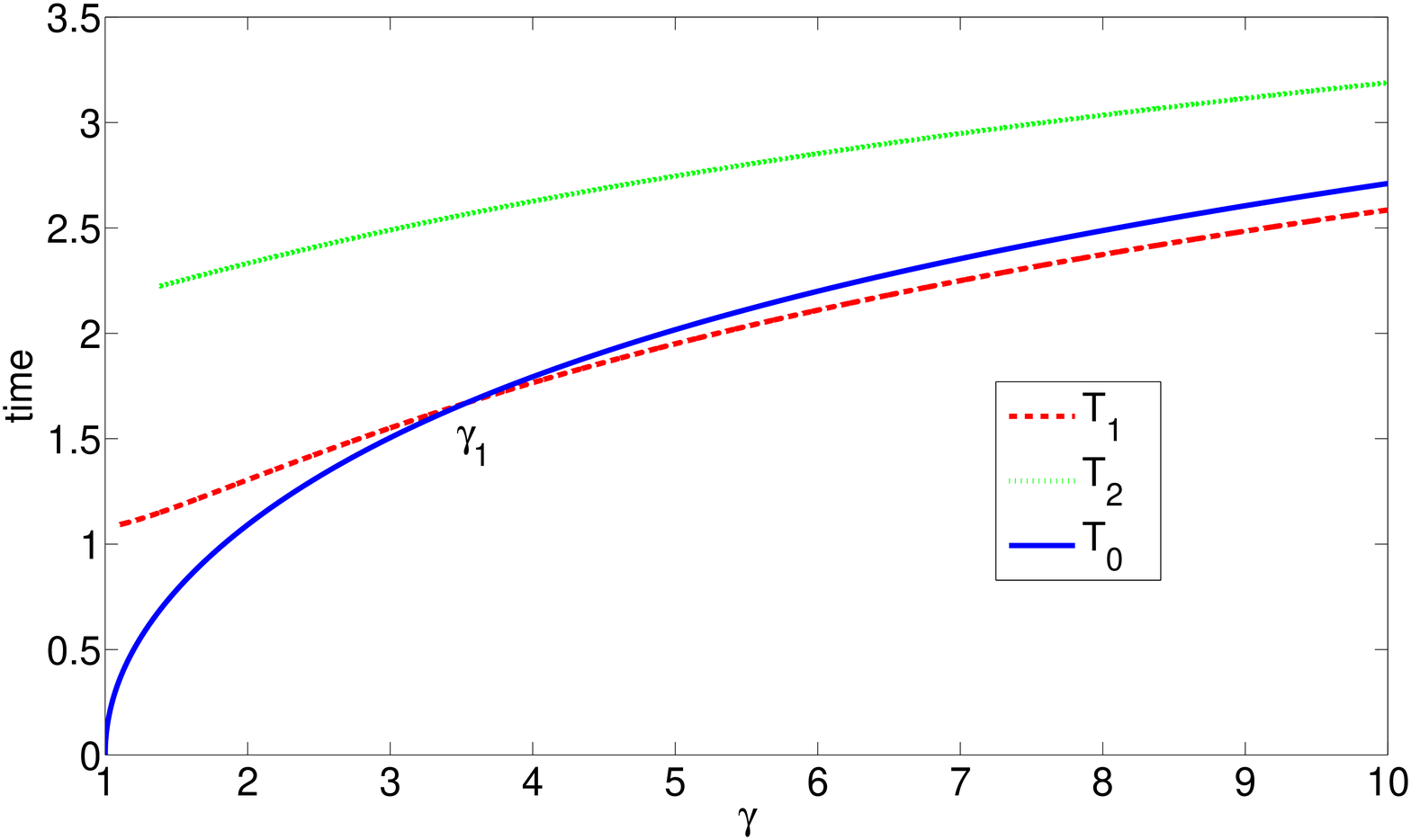}}\\
\subfigure[$\ $]{ \label{fig:synthesis2} \includegraphics[width=.8\linewidth]{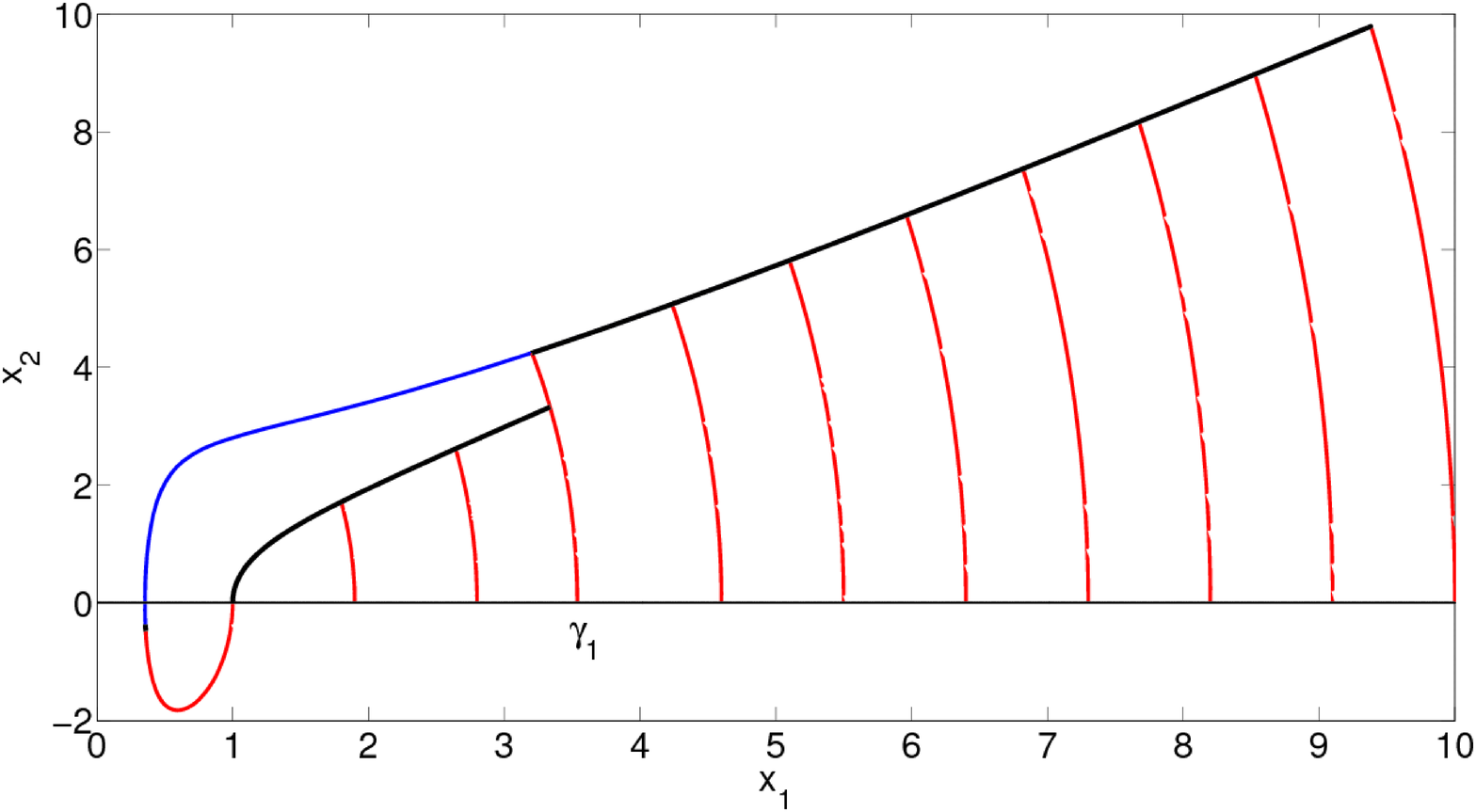}}
\end{tabular}
\caption{(a) Transfer times corresponding to zero, one and two turns for
$u_{1}=1, u_{2}=8, \gamma\in[1,10]$. (b) Switching curves (black solid curves) and
characteristic optimal trajectories starting from $(1,0)$.}%
\label{fig:twomax}%
\end{figure}

\section{EXAMPLES}

In Fig.\ \ref{fig:times2} we plot the times $T_{0}, T_{1}$ and $T_{2}$ from Theorem \ref{prop:time}, corresponding to zero, one and two turns, for
$u_{1}=1, u_{2}=8$ and $\gamma\in[1,10]$. For $\gamma\leq\gamma_{1}$ the
strategy with zero turns (one switching) is optimal, while for $\gamma
\geq\gamma_{1}$ it is the strategy with one turn (up to the range of $\gamma$
plotted). The point $(\gamma_{1},0)$ can be reached with both strategies in
equal time, that is, it belongs to the cut-locus of these two control sequences from $(1,0)$. Note that the strategies
with one and two turns are feasible after some $\gamma>1$, where the
transcendental equation (\ref{transcendental}) has a solution. In Fig.\
\ref{fig:synthesis2} we plot the switching curves (black solid curves) as well as
some characteristic optimal trajectories starting from $(1,0)$. For
$\gamma\leq\gamma_{1}$ the optimal trajectory starts with an $X$-segment that
coincides with the switching curve (black solid curve) passing from $(1,0)$. It
switches at some point and then travels along a $Y$-segment (red dotted curve) to
meet the $x_{1}$-axis. For $\gamma\geq\gamma_{1}$ the optimal trajectory
starts with a $Y$-segment (red dotted curve passing from $(1,0)$) and switches at
some point in the tiny black area of this curve to an $X$-segment (blue dashed curve).
Then it meets at some point the second switching curve on the upper quadrant
and changes to a $Y$-segment (red dotted curve) that hits the $x_{1}$-axis at the target
point. Note that the optimal trajectories between the two switchings (blue dashed
curves) are very close to the second switching curve on the upper quadrant and
they are not shown entirely.

\begin{figure}[t]
\centering
\begin{tabular}
[c]{c}%
\subfigure[$\ $]{ \label{fig:times4} \includegraphics[width=.8\linewidth]{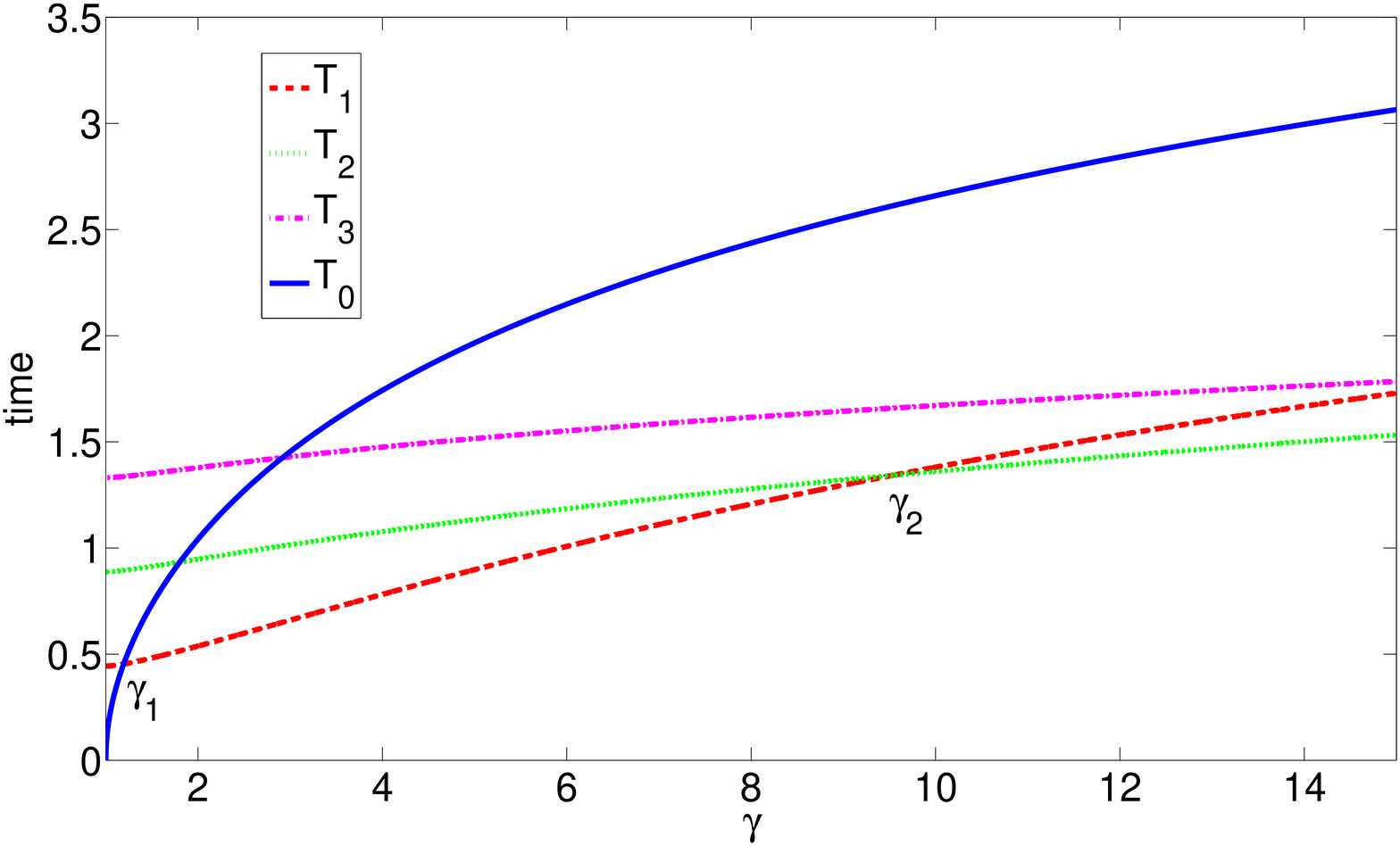}}\\
\subfigure[$\ $]{ \label{fig:synthesis4} \includegraphics[width=.8\linewidth]{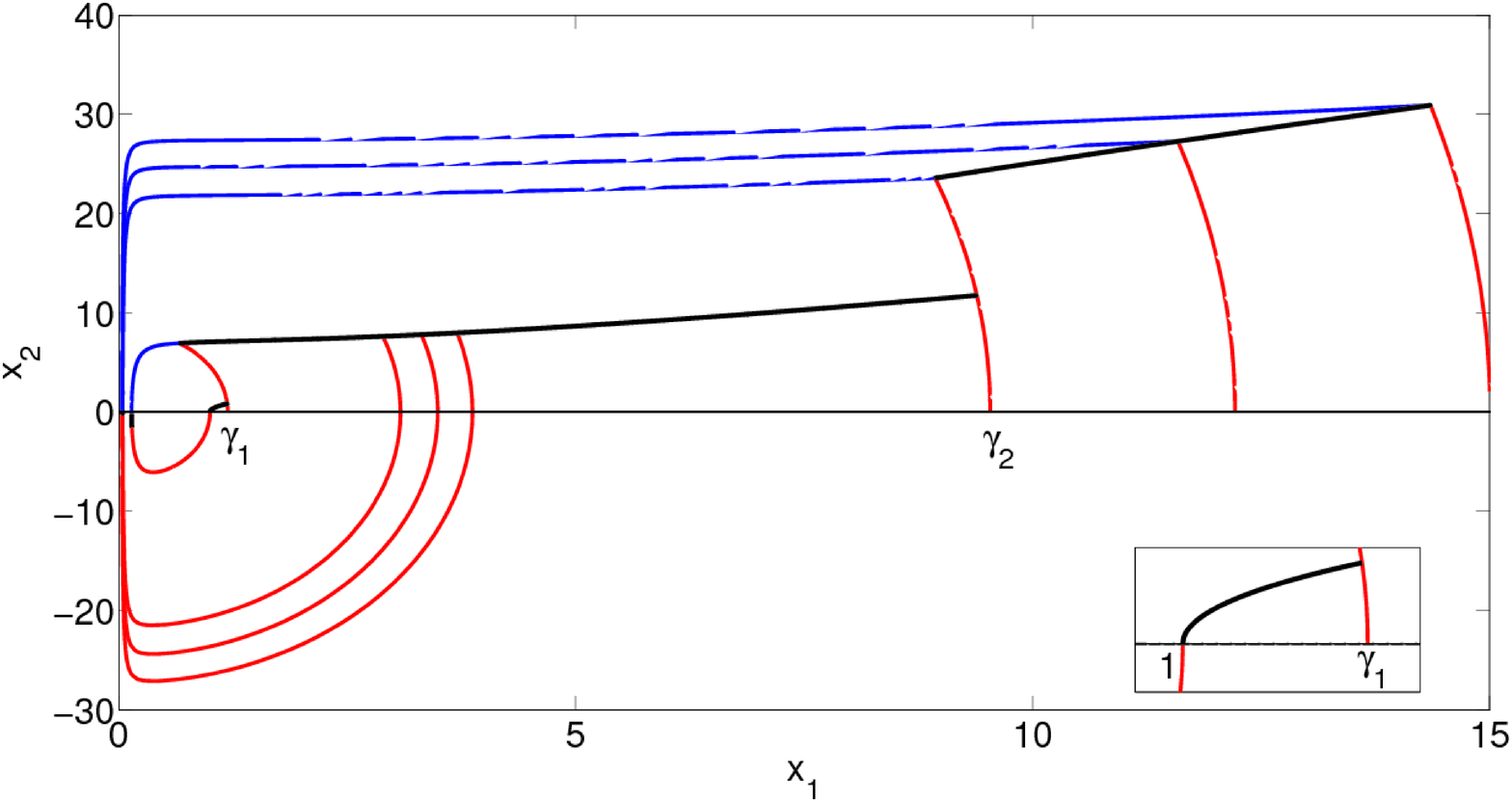}}
\end{tabular}
\caption{(a) Transfer times corresponding to zero, one, two and three turns
for $u_{1}=1, u_{2}=50, \gamma\in[1,15]$.(b) Switching curves (black solid curves)
and characteristic optimal trajectories starting from $(1,0)$.}%
\label{fig:fourmax}%
\end{figure}

In Fig.\ \ref{fig:times4} we plot the times $T_{0}, T_{1}, T_{2}$ and $T_{3}$ from Theorem \ref{prop:time}, corresponding to zero, one, two and three turns, for
$u_{1}=1, u_{2}=50$ and $\gamma\in[1,15]$. Again, for small $\gamma$ the
one-switching strategy is optimal and after some $\gamma=\gamma_{1}$ the
one-turn strategy becomes faster, but there is also a $\gamma=\gamma_{2}$
beyond which the two-turn strategy is optimal (up to the range of $\gamma$
plotted). The point $(\gamma_{2},0)$ thus belongs to the cut-locus of the one- and two-turn control sequences from $(1,0)$
since it can be reached with one or two turns in equal time. In Fig.\
\ref{fig:synthesis4} we plot the switching curves (black solid curves) along with
some characteristic optimal trajectories starting from $(1,0)$. For
$\gamma\geq\gamma_{2}$ the optimal trajectory makes an additional turn. This
is demonstrated by the three adjacent $Y$-segments (red dotted curves), which switch
close to $0$ to the corresponding $X$-segments (blue dashed curves), on a tiny
switching curve which is hardly seen. In turn, these trajectories switch on
the third switching curve on the upper quadrant to $Y$-segments (red dotted curves)
that hit the $x_{1}$-axis at the target points.

\section{CONCLUSION AND FUTURE WORK}

In this article we solved a time-optimal control problem related to frictionless atom cooling in harmonic traps. The results presented here can be immediately extended to the fast frictionless expansion of a two-dimensional Bose-Einstein condensate confined in a parabolic trapping potential \cite{Muga09}, with possible application in atom interferometry, and even to the implementation of a quantum dynamical microscope, a controlled expansion that allows to scale up an initial many-body state of an ultracold gas by a desired factor while preserving the quantum correlations of the initial state \cite{delCampo11}. 

This work has also been used to show how the third law of classical thermodynamics, known as unattainability principle, emerges from quantum thermodynamics \cite{Rezek11}. In a dynamical interpretation, this law states that absolute zero is unattainable, since the cooling rate from a thermal bath with falling temperature declines as well and approaches zero with an appropriate power of the temperature. The heat machine used to demonstrate this is a quantum refrigerator, the quantum analog of the classical Otto cycle, where the working medium is made up of particles in a harmonic (possibly repelling) potential instead of classical particles in a piston. The initial frequency coincides with the maximum allowed frequency as well as the strongest repelling frequency, so $u_1=u_2=1$. From Corollary \ref{uone} we see that the minimum time $T$ for the adiabatic-like cooling branch is given by equation (\ref{time_0}) in Theorem \ref{prop:time}. As the temperature approaches zero, $\omega_T\rightarrow 0$ and $\gamma=\sqrt{\omega_0/\omega_T}\rightarrow\infty$, so the cooling time approaches infinity logarithmically, $T\sim\ln\gamma$ \cite{Hoffmann_EPL11}.






\end{document}